\newcommand{\set}[1]{\left\{#1\right\}}
\newcommand{\norm}[1]{\left\Vert #1 \right\Vert}
\newcommand{\abs}[1]{\left\vert #1 \right\vert}
\newcommand{\ra}{\rightarrow}
\newcommand{\Real}{\mathbb{R}}
\newcommand{\eps}{\varepsilon}
\newcommand{\B}{\mathcal{B}}
\newcommand{\D}{\mathcal{D}}
\renewcommand{\subset}{\subseteq}
\newcommand{\W}{\mathcal{W}}
\newcommand{\BlackBox}{\rule{1.5ex}{1.5ex}}  % end of proof
\newenvironment{proof}{\par\noindent{\bf Proof\ }}{\hfill\BlackBox\\[2mm]}
\newtheorem{example}{Example} 
\newtheorem{theorem}{Theorem}
\newtheorem{proposition}{Proposition} 
\newtheorem{remark}{Remark}
\newtheorem{definition}{Definition}
\newtheorem{assumption}{Assumption}
\newif\ifarXiversion
\begin{document}

\begin{frontmatter}
\runtitle{Physics-Informed Neural Network Lyapunov Functions}  % Running title for regular 
                                              % papers but only if the title  
                                              % is over 5 words. Running title 
                                              % is not shown in output.

\title{Physics-Informed Neural Network Lyapunov Functions:\\ PDE Characterization, Learning, and Verification\thanksref{footnoteinfo}} % Title, preferably not more 
                                                % than 10 words.

\thanks[footnoteinfo]{This paper was not presented at any IFAC meeting.}

\author{Jun Liu}\ead{j.liu@uwaterloo.ca}, 
\author{Yiming Meng} \ead{yiming.meng@uwaterloo.ca}, 
\author{Maxwell Fitzsimmons}\ead{mfitzsimmons@uwaterloo.ca}, and 
\author{Ruikun Zhou}\ead{ruikun.zhou@uwaterloo.ca}

\address{Department of Applied Mathematics\\
       University of Waterloo\\
       Waterloo, Ontario N2L 3G1, Canada\\}  % Please supply                                              
% \author{Yiming Meng}\ead{yiming.meng@uwaterloo.ca}    % Add the 

% \address{Department of Applied Mathematics\\
%       University of Waterloo\\
%       Waterloo, Ontario N2L 3G1, Canada\\}  % Please supply 

\begin{keyword}                           % Five to ten keywords, 
Stability analysis; Nonlinear systems; Lyapunov functions; Neural networks; Formal verification; Zubov's equation; Region of attraction% chosen from the IFAC 
\end{keyword}                             % keyword list or with the 
                                          % help of the Automatica 
                                          % keyword wizard

\begin{abstract}                          % Abstract of not more than 200 words.
We provide a systematic investigation of using physics-informed neural networks to compute Lyapunov functions. We encode Lyapunov conditions as a partial differential equation (PDE) and use this for training neural network Lyapunov functions. We analyze the analytical properties of the solutions to the Lyapunov and Zubov PDEs. In particular, we show that employing the Zubov equation in training neural Lyapunov functions can lead to verifiable approximate regions of attraction close to the true domain of attraction. We also examine approximation errors and the convergence of neural approximations to the unique solution of Zubov's equation. We then provide sufficient conditions for the learned neural Lyapunov functions that can be readily verified by satisfiability modulo theories (SMT) solvers, enabling formal verification of both local stability analysis and region-of-attraction estimates in the large. Through a number of nonlinear examples, ranging from low to high dimensions, we demonstrate that the proposed framework can outperform traditional sum-of-squares (SOS) Lyapunov functions obtained using semidefinite programming (SDP).
\end{abstract}

\end{frontmatter}

\section{Introduction}

Stability analysis of nonlinear dynamical systems has been a focal point of research in control and dynamical systems. In many applications, characterizing the domain of attraction for an asymptotically stable equilibrium point is crucial. For example, in power systems, understanding the domain of attraction is essential for assessing whether the system can recover to a stable equilibrium after experiencing a fault. 

Since Lyapunov's landmark paper over a century ago \cite{lyapunov1992general}, Lyapunov functions have become an instrumental tool for nonlinear stability analysis and control design. Consequently, extensive research has focused on Lyapunov functions. One key challenge is their construction, which has been addressed through both analytical \cite{haddad2008nonlinear,sepulchre2012constructive} and computational methods \cite{giesl2007construction,giesl2015review}.

Among computational methods for Lyapunov functions, sum-of-squares (SOS) techniques have garnered widespread attention \cite{papachristodoulou2002construction,papachristodoulou2005tutorial,packard2010help,tan2008stability,topcu2008local,jones2021converse}. These methods facilitate stability analysis and provide estimates of the domain of attraction \cite{topcu2008local,tan2008stability,packard2010help}. Leveraging semidefinite programming (SDP), one can extend the region of attraction by using a specific ``shape function" within the estimated region. However, selecting shape functions in a principled manner, beyond standard norm \cite{packard2010help} or quadratic functions \cite{khodadadi2014estimation}, remains elusive.

On the other hand, Zubov's theorem \cite{zubov1964methods} characterizes the domain of attraction through a partial differential equation (PDE), unlike the commonly seen Lyapunov conditions that manifest as partial differential inequalities. Using an equation allows for precise characterization of the domain of attraction. The concept of a maximal Lyapunov function \cite{vannelli1985maximal} is closely related to Zubov's method. The authors of \cite{vannelli1985maximal} also provide a computational procedure for constructing maximal Lyapunov functions using rational functions.

Thanks to the recent surge of interest in neural networks and machine learning, many authors have recently investigated the use of neural networks for computing Lyapunov functions (see, e.g., \cite{chang2019neural,grune2021overcoming,gaby2022lyapunov,abate2020formal,gaby2022lyapunov,kang2021data}, and \cite{dawson2023safe} for a recent survey). In fact, such efforts date back to as early as the 1990s \cite{long1993feedback,prokhorov1994lyapunov}. Unlike SDP-based synthesis of SOS Lyapunov functions, neural network Lyapunov functions obtained by training are not guaranteed to be Lyapunov functions. Subsequent verification is required, e.g., using satisfiability modulo theories (SMT) solvers \cite{chang2019neural,ahmed2020automated}. The use of SMT solvers for searching and refining Lyapunov functions has been explored previously \cite{kapinski2014simulation}. Counterexample-guided search of Lyapunov functions using SMT solvers is investigated in \cite{ahmed2020automated} and the associated tool \cite{abate2021fossil}, which supports both Z3 \cite{de2008z3} and dReal \cite{gao2013dreal} as verifiers. Neural Lyapunov functions with SMT verification are explored in \cite{zhou2022neural} for systems with unknown dynamics. SMT verification is often time-consuming, especially when seeking a maximal Lyapunov function \cite{liu2023towards} or dealing with high-dimensional systems. Recent work has also focused on learning neural Lyapunov functions and verifying them through optimization-based techniques, e.g., \cite{chen2021learningHybrid,chen2021learningROA,dai2021lyapunov,dai2020counter}. Such techniques usually employ (leaky) ReLU networks and use mixed integer linear/quadratic programming (MILP/MIQP) for verification.

While recent work has demonstrated the promise of using neural networks for computing Lyapunov functions, to the best knowledge of the authors, none of the work has provided a systematic investigation of using physics-informed neural networks \cite{raissi2019physics,lagaris1998artificial} for solving the Zubov equation and using these networks to provide verified regions of attractions close to the true domain of attraction. We highlight several papers particularly related to our work. In \cite{kang2021data}, a data-driven approach is proposed to approximate solutions to Zubov's equation. It is demonstrated that neural networks can effectively approximate the solutions. However, formal verification is not conducted, and Zubov's PDE is not encoded in the training of Lyapunov functions. As demonstrated by our preliminary work \cite{liu2023towards}, encoding Zubov's equation allows us to improve the verifiable regions of attraction. Furthermore, as we shall also demonstrate in numerical examples of the current paper, formal verification is indispensable, as in many examples, we are not able to verify levels nearly as close to 1, which is predicted by the theoretical result of Zubov. This is inevitable due to the approximation errors and the compromise one has to ultimately make between using low structural complexity that enables efficient verification and high expressiveness that requires wider/deeper neural networks. In contrast, the work in \cite{grune2021computing} uses an approach closer to physics-informed neural networks (PINNs) \cite{raissi2019physics,lagaris1998artificial} for approximating a solution to Zubov's equation. However, the approach in \cite{grune2021computing} is local in nature, and the Lyapunov conditions used to train the Lyapunov functions are essentially conditions for local exponential stability. Furthermore, the author stated that using Lyapunov inequalities can lead to better training results. However, in our setting, capturing the domain of attraction unavoidably requires solving PDE instead of partial differential inequalities. The work in \cite{jones2021converse}, even though focusing on SOS approaches for approximating Lyapunov functions, is closely related to our work. The partial differential inequality constraint that the authors used to optimize the polynomial Lyapunov functions takes the form of Zubov's PDEs, although not explicitly mentioned as such in the paper. The earlier work in \cite{camilli2001generalization} has been very informative in analyzing solution properties of Zubov's equation, especially with perturbations. While their analysis of existence and uniqueness of viscosity solutions heavily relies on prior work in \cite{soravia1999optimality}, our analysis is more direct and relies on standard references on viscosity solutions for first-order PDEs \cite{bardi1997optimal,crandall1983viscosity}. We particularly highlight our novel analysis of more general nonlinear transformations between the Lyapunov and Zubov equations. To the best of the authors' knowledge, this work is the first to demonstrate that neural network Lyapunov functions can provide verifiable region-of-attraction estimates that are provably close to the true domain of attraction through both theoretical analysis and thorough numerical experiments.

A preliminary version of this paper was published as a conference paper in \cite{liu2023towards}. The current paper significantly expands the theoretical analysis and numerical experiments compared to the preliminary work in \cite{liu2023towards}. More specifically, we theoretically characterize Lyapunov functions as solutions to PDEs and conduct a more systematic investigation of solutions to Lyapunov and Zubov equations. In this process, the consideration of viscosity solutions becomes essential, as smooth solutions may not exist, as demonstrated by simple examples (see Examples 1--3 in Section 3). Employing the concept of viscosity solutions, we establish the uniqueness of solutions to Lyapunov and Zubov equations and investigate approximation errors and convergence of approximate solutions to the unique solution of Zubov's equation. None of these analyses of Lyapunov and Zubov's PDEs were present in the conference version \cite{liu2023towards}. Moreover, we have significantly expanded the set of examples solved, demonstrating that the neural Zubov approach can indeed surpass standard sum-of-squares (SOS) Lyapunov functions in approximating the domain of attraction. Notably, we have extended the range of examples from simple low-dimensional polynomial systems in \cite{liu2023towards} to include both non-polynomial systems and higher-dimensional systems not present in \cite{liu2023towards}.

\textbf{Notation:} $\mathbb{R}^n$ denotes the $n$-dimensional Euclidean space; $\abs{\cdot}$ is the Euclidean norm; $\mathbb{R}$ is the set of real numbers; $C(\Omega)$ and $C^1(\Omega)$ indicate the set of real-valued continuous and continuously differentiable functions, respectively, with domain $\Omega$; $C(\Omega,I)$ denotes the set of continuous functions with domain $\Omega$ and range $I$; $Dg$ denotes the gradient or Jacobian of a function $g$; the derivative of a univariate scalar function $V$ is sometimes denoted by $V'$; the derivative with respect to time of a time-dependent function $x$ is denoted by $\dot{x}$; the derivative of a multivariate scalar function $V$ along solutions of an ordinary differential equation is also denoted by $\dot{V}$.

\section{Problem Formulation} \label{sec:prel}

Consider a continuous-time system 
\begin{equation}\label{eq:sys}
    \dot x = f(x),
\end{equation}
where $f:\,\Real^n\ra\Real^n$ is a locally Lipschitz function. The unique solution to (\ref{eq:sys}) from the initial condition $x(0)=x_0$ is denoted by $\phi(t,x_0)$ for $t\in J$, where $J$ is the maximal interval of existence for $\phi$. 

We assume that $x=0$ is a locally asymptotically equilibrium point of (\ref{eq:sys}). The \textit{domain of attraction} of the origin for (\ref{eq:sys}) is defined as
\begin{equation}
    \D: = \set{x\in\Real^n:\,\lim_{t\ra \infty}\abs{\phi(t,x)} = 0}. 
\end{equation}
We know that $\D$ is an open and connected set \cite{bhatia1967stability}. We call any forward invariant subset of $\D$ a \textit{region of attraction} (ROA). 

Lyapunov functions can not only certify the asymptotic stability of an equilibrium point, but can also provide regions of attraction. This is achieved using sub-level sets defined as
$
\mathcal{V}_c:=\set{x\in X:\, V(x)\le c}, 
$
where $c>0$ is a positive constant, and $X$ represents a domain of interest, typically the set within which a Lyapunov function is defined. We are interested in computing regions of attraction, as they not only provide a set of initial conditions with guaranteed convergence to the equilibrium point, but also ensure state constraints and safety through forward invariance. 

The goal of this paper is to provide a systematic investigation of computing Lyapunov functions using physics-informed neural networks. We review the PDE characterization of the domain of attraction through the work of Lyapunov \cite{lyapunov1992general} and Zubov \cite{zubov1964methods}, as well as using the notion of viscosity solutions \cite{crandall1983viscosity,bardi1997optimal} for Hamilton-Jacobi equations to formalize the existence and uniqueness of solutions. We then describe algorithms for learning neural Lyapunov functions through physics-informed neural networks for solving Lyapunov and Zubov PDEs. Through a list of examples, we demonstrate that physics-informed neural network Lyapunov functions can outperform sum-of-squares (SOS) Lyapunov functions in terms of tighter region-of-attraction estimates.

\section{PDE Characterization of Lyapunov Functions}
    
\subsection{Lyapunov equation}

\begin{definition}
Let $\Omega\subset\Real^n$ be a set containing the origin. A function $V:\,\Omega\ra \Real$ is said to be {positive definite} on $\Omega$ (with respect to the origin) if $V(0)=0$ and $V(x)>0$ for all $x\in \Omega\setminus\set{0}$. We say $V$ is {negative definite} on $\Omega$ if $-V$ is positive definite on $\Omega$. 
\end{definition}

In a nutshell, a Lyapunov function for (\ref{eq:sys}) is a positive definite function whose derivative along trajectories of (\ref{eq:sys}) is negative definite. 

We refer to the following PDE as a \textit{Lyapunov equation}:
\begin{equation}
    \label{eq:lyap}
    D V \cdot f = -\omega, \quad x\in \Omega,
\end{equation}
where $\Omega$ is an open set containing the origin, $f$ is the right-hand side of (\ref{eq:sys}), and $\omega$ is a positive definite function. If we can find a positive definite solution $V$ of (\ref{eq:lyap}), then $V$ is a Lyapunov function for (\ref{eq:sys}). 

The Lyapunov equation (\ref{eq:lyap}) is a linear PDE. Its solvability is intimately tied with the solution properties of the ODE (\ref{eq:sys}). In fact, (\ref{eq:sys}) is known as the characteristic ODE for (\ref{eq:lyap}). Textbook results on the method of characteristics for first-order PDEs state that a local solution to (\ref{eq:lyap}) exists, provided that compatible and non-characteristic\footnote{Intuitively, this means the boundary is not tangent to trajectories of (\ref{eq:sys}).} boundary conditions are given \cite[Chapter 3]{evans2010partial}. 

The scenario here is somewhat different. While we assume that the origin is an asymptotically stable equilibrium point of (\ref{eq:sys}), we do not wish to impose non-characteristic boundary conditions. Rather, we prefer to impose the trivial boundary condition $V(0)=0$. Furthermore, we are interested not only in local solutions but also in the solvability of (\ref{eq:lyap}) on a given domain containing the origin. To this end, we formulate the following technical result. 

We consider two technical conditions that are sufficient for solvability of (\ref{eq:lyap}). 

\begin{assumption}\label{as:omega}
The origin is an asymptotically stable equilibrium point for (\ref{eq:sys}) and $f$ is locally Lipschitz. The function $\omega:\,\Real^n\ra \Real$ is continuous and positive definite. Define 
\begin{equation}
    \label{eq:V}
V(x)=\int_0^\infty \omega(\phi(t,x))dt,\quad x\in \Real^n, 
\end{equation}
where, if the integral diverges, we let $V(x)=\infty$. The following items hold true:
\begin{enumerate}[(i)]
    \item For any $\delta>0$, there exists $c>0$ such that $\omega(x)>c$ for all $\abs{x}>\delta$. 
    \item There exists some $\rho>0$ such that the integral $V(x)$ defined by (\ref{eq:V}) converges for all $x$ such that $\abs{x}<\rho$. 
    \item For any $\eps>0$, there exists $\delta>0$ such that $\abs{x}<\delta$ implies $V(x)<\eps$. 
\end{enumerate}
\end{assumption}

Intuitively, since $\omega$ is assumed to be positive definite and continuous, condition (i) is equivalent to that $\omega$ is non-vanishing at infinity, i.e., $\liminf_{|x|\rightarrow \infty} \omega(x) > 0$. Note that $V(0)=0$ and $V$ is nonnegative because $\omega$ is positive definite. Condition (iii) essentially states that $V$ is continuous at 0, while (ii) requires $V$ to be finite in a neighborhood of the origin. Clearly,  (iii) implies (ii). 

\begin{remark}
    If $\omega$ is Lipschitz around the origin and the origin is an exponentially stable equilibrium point for (\ref{eq:sys}), then conditions (ii) and (iii) of Assumption \ref{as:omega} hold. One common choice is $\omega(x)=x^TQx$ for some positive definite matrix $Q$. When $f(x)=Ax$ and $\omega(x)=x^TQx$, a solution of (\ref{eq:lyap}) is given by $V(x)=x^TPx$, where $P$ satisfies the celebrated Lyapunov equation $PA +A^TP=-Q$. 
\end{remark}

We first examine properties of $V$ defined by (\ref{eq:V}). 

\begin{proposition}\label{prop:V}
Let Assumption \ref{as:omega} hold. The function $V:\,\Real^n\ra\Real\cup\set{\infty}$ defined by (\ref{eq:V}) satisfies the following: 
\begin{enumerate}
    \item $V(x)<\infty$ if and only if $x\in \D$;
    \item $V(x)\ra \infty$ as $x\ra y$ for some $y\in \partial \D$; 
    \item $V$ is positive definite on $\D$;
    \item $V$ is continuous on $\D$ and its right-hand derivative along the solution of (\ref{eq:sys}) satisfies 
    \begin{equation}
        \label{eq:dini_V}
        \dot{V}(x) : = \lim_{t\ra 0^+} \frac{V(\phi(t,x))-V(x)}{t} = -\omega(x)
    \end{equation}
    for all $x\in \D$. 
\end{enumerate}
\end{proposition}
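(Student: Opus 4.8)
The plan is to rely throughout on two elementary facts: that $\D$ is open (noted above) and invariant under the flow of (\ref{eq:sys}), and that local asymptotic stability furnishes an $r>0$ with $B_r(0)\subset\D$. From these comes the key observation used repeatedly below: \emph{if $x\notin\D$, then $\abs{\phi(t,x)}\ge r$ for every $t$ in the forward interval of existence of $\phi(\cdot,x)$.} Indeed, if $\abs{\phi(t_0,x)}<r$ for some $t_0\ge 0$, then $\phi(t_0,x)\in B_r(0)\subset\D$, and the semigroup identity $\phi(s,\phi(t_0,x))=\phi(s+t_0,x)$ shows $\phi(\cdot,x)$ is global with $\phi(u,x)\to 0$, i.e. $x\in\D$ --- a contradiction.

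Items 1 and 3 are then short. If $x\in\D$, choose $T$ with $\abs{\phi(T,x)}<\rho$ ($\rho$ from Assumption~\ref{as:omega}(ii)), and split $V(x)=\int_0^T\omega(\phi(t,x))\,dt+\int_0^\infty\omega(\phi(t,\phi(T,x)))\,dt$ via the semigroup identity: the first term is finite since $t\mapsto\omega(\phi(t,x))$ is continuous on $[0,T]$, and the second converges by (ii). If $x\notin\D$, then either $\phi(\cdot,x)$ is global, and by the key observation together with Assumption~\ref{as:omega}(i) (applied with $\delta=r/2$) the integrand $\omega(\phi(t,x))$ is bounded below by a positive constant, so $V(x)=\infty$; or $\phi(\cdot,x)$ escapes in finite time and $V(x)=\infty$ by the convention in (\ref{eq:V}). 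For item 3, $V(0)=\int_0^\infty\omega(0)\,dt=0$, and for $x\in\D\setminus\set{0}$ uniqueness of solutions gives $\phi(t,x)\neq 0$ for all finite $t$, so $t\mapsto\omega(\phi(t,x))$ is continuous and strictly positive on some $[0,\tau]$, whence $V(x)\ge\int_0^\tau\omega(\phi(t,x))\,dt>0$; finiteness is item 1.

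The derivative formula in item 4 is the cleanest part: for $x\in\D$ and $h>0$, the semigroup identity gives $V(\phi(h,x))=\int_0^\infty\omega(\phi(t+h,x))\,dt=\int_h^\infty\omega(\phi(s,x))\,ds$, so $V(\phi(h,x))-V(x)=-\int_0^h\omega(\phi(s,x))\,ds$ (legitimate since $V(x)<\infty$); dividing by $h$ and letting $h\to 0^+$ yields $-\omega(x)$ by continuity of $s\mapsto\omega(\phi(s,x))$ at $0$. Continuity of $V$ on $\D$ needs more care and is where Assumption~\ref{as:omega}(iii) controls the tail uniformly: for $x\in\D$, $x_n\to x$ (so $x_n\in\D$ eventually, $\D$ being open) and $\eps>0$, take $\delta$ from (iii) and $T$ with $\abs{\phi(T,x)}<\delta/2$; continuous dependence on initial conditions gives $\abs{\phi(T,x_n)}<\delta$ for large $n$ and $\int_0^T\omega(\phi(t,x_n))\,dt\to\int_0^T\omega(\phi(t,x))\,dt$, while the tails equal $V(\phi(T,x_n))$ and $V(\phi(T,x))$, each $<\eps$ by (iii); hence $\limsup_n\abs{V(x_n)-V(x)}\le 2\eps$.

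I expect item 2 to be the main obstacle. Fix $y\in\partial\D$; since $\D$ is open, $y\notin\D$, so the key observation gives $\abs{\phi(t,y)}\ge r$ along $\phi(\cdot,y)$. Given $M>0$, pick $c>0$ with $\omega\ge c$ on $\set{\abs{x}>r/2}$ (Assumption~\ref{as:omega}(i)) and $T$ with $cT>M$; continuous dependence yields a neighbourhood $U$ of $y$ on which solutions exist up to time $T$ and stay within $r/2$ of $\phi(\cdot,y)$ on $[0,T]$, so $\abs{\phi(t,x)}>r/2$ and $V(x)\ge\int_0^T\omega(\phi(t,x))\,dt\ge cT>M$ for $x\in U$; thus $V(x)\to\infty$ as $x\to y$ whenever $\phi(\cdot,y)$ is defined on $[0,\infty)$. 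The remaining case --- $\phi(\cdot,y)$ with finite escape time, where $T$ cannot be taken arbitrarily large along $\phi(\cdot,y)$ --- is covered by the convention $V(y)=+\infty$ but requires an extra continuous-dependence estimate for the values $V(x)$ at $x$ near $y$; this is the subtlety I expect to cost the most effort, the other ingredients (openness and invariance of $\D$, the semigroup property, and continuous dependence on initial conditions) being standard.
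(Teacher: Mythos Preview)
Your argument for items 1, 3, and 4 (both the derivative formula and the continuity of $V$) is essentially the paper's proof, line for line.

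For item 2 the paper takes a slightly different tack that avoids your explicit case split. Instead of tracking $\phi(\cdot,y)$ and invoking continuous dependence around it, the paper introduces the first hitting time $T_n$ of $\bar\B_\delta\subset\D$ for each trajectory $\phi(\cdot,x_n)$: since $\omega\ge c$ outside $\B_\delta$, one has $V(x_n)\ge cT_n$, and it remains to show $T_n\to\infty$. This is done by contradiction --- if a subsequence satisfies $T_n\to T<\infty$, then ``by continuity'' $\phi(T_n,x_n)\to\phi(T,y)\in\bar\B_\delta\subset\D$, whence $y\in\D$, contradicting $y\in\partial\D$. The gain is organizational: because $T_n<\infty$ forces $x_n\in\D$, the trajectories $\phi(\cdot,x_n)$ used in the argument are automatically global, so no branch on the escape time of $\phi(\cdot,y)$ is written out. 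That said, the limit step ``$\phi(T_n,x_n)\to\phi(T,y)$'' still presupposes that $(T,y)$ lies in the domain of the flow, so the finite-escape subtlety you flag is present in the paper's proof as well --- it is simply not discussed there. In either approach the issue is vacuous whenever solutions through $\partial\D$ are global (for instance when $\partial\D$ is bounded, as in all of the paper's examples), and otherwise needs a short continuation argument; your instinct that this is where the real work lies is correct.
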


A proof of Proposition \ref{prop:V} can be found in Appendix \ref{sec:proof_prop_V}. While the Lyapunov condition (\ref{eq:dini_V}) is sufficient for stability analysis, it does not provide a PDE characterization of Lyapunov functions, nor does it reveal regularity properties of Lyapunov functions as solutions to (\ref{eq:lyap}).

Next, we further examine in what sense $V$, defined by (\ref{eq:V}), satisfies the Lyapunov equation (\ref{eq:lyap}). For simplicity of analysis and to ensure greater regularity in the solutions to (\ref{eq:lyap}), we also consider the following assumption.

\begin{assumption}\label{as:f}
    The origin is exponentially stable for (\ref{eq:sys}) and $f$ is continuously differentiable. 
\end{assumption}

\begin{proposition}\label{prop:lyap}
    Let $\Omega\subset \D$ be any open set containing the origin. Let Assumption \ref{as:omega} hold. The following statements are true:
    \begin{enumerate}
        \item $V$ defined by (\ref{eq:V}) is the unique continuous solution to (\ref{eq:lyap}) on $\Omega$ in the viscosity sense satisfying $V(0)=0$. 
        \item If $\omega$ is locally Lipschitz and Assumption \ref{as:f} holds, then $V$ defined by (\ref{eq:V}) is locally Lipschitz and satisfies (\ref{eq:lyap}) almost everywhere on $\Omega$. 
        \item If $\omega\in C^1(\Real^n)$ and Assumption \ref{as:f} holds, then $V$ defined by (\ref{eq:V}) is the unique continuously differentiable solution to (\ref{eq:lyap}) on $\Omega$ with $V(0)=0$. 
    \end{enumerate}
\end{proposition}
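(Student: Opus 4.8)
The common engine for all three parts is the \emph{dynamic-programming identity}
\[
V(x)-V(\phi(t,x))=\int_0^t\omega(\phi(s,x))\,ds,\qquad x\in\D,\ t\ge 0,
\]
which is immediate from (\ref{eq:V}) and the semigroup property $\phi(s+t,x)=\phi(s,\phi(t,x))$ (both sides are finite since $x\in\D$, by Proposition~\ref{prop:V}). For the existence half of part~1, Proposition~\ref{prop:V} already gives that $V$ restricted to $\Omega\subset\D$ is continuous with $V(0)=0$, so it only remains to verify that $V$ is a viscosity solution on $\Omega$. I would do this by the usual test-function argument: if $\varphi\in C^1$ and $V-\varphi$ has a local maximum (resp.\ minimum) at $x_0\in\Omega$, evaluate $V-\varphi$ at $x_0$ and at $\phi(t,x_0)$ for small $t>0$, substitute the identity above, divide by $t$, and let $t\to 0^+$; since $\frac{d}{dt}\varphi(\phi(t,x_0))|_{t=0}=D\varphi(x_0)\cdot f(x_0)$ and $\frac1t\int_0^t\omega(\phi(s,x_0))\,ds\to\omega(x_0)$, one obtains $-D\varphi(x_0)\cdot f(x_0)-\omega(x_0)\le 0$ (resp.\ $\ge 0$), which are precisely the viscosity subsolution (resp.\ supersolution) inequalities for (\ref{eq:lyap}) written as $F(x,DV)=0$ with $F(x,p):=-p\cdot f(x)-\omega(x)$.

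For uniqueness in part~1 --- and, with the obvious simplifications, in part~3 --- let $W\in C(\Omega)$ be any viscosity solution with $W(0)=0$. The first step is to show that $W$ also satisfies the dynamic-programming identity along every trajectory arc that remains in $\Omega$; this is the ``monotonicity along characteristics'' property of viscosity sub- and supersolutions of Hamilton--Jacobi equations affine in the gradient \cite{bardi1997optimal}, and follows from the same test-function computation as above after partitioning the time interval. Hence $U:=W-V$ is constant along every trajectory arc contained in $\Omega$, and $U(0)=0$. The second step is to conclude $U\equiv 0$ on $\Omega$: by asymptotic stability there is a forward-invariant neighbourhood $\mathcal N\subset\Omega$ of the origin; every trajectory starting in $\mathcal N$ stays in $\mathcal N$ and converges to the origin, so $U\equiv 0$ on $\mathcal N$ by continuity, and since every trajectory starting in $\Omega$ eventually enters $\mathcal N$, a continuity/connectedness argument propagates $U\equiv 0$ to all of $\Omega$. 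I expect this propagation step to be the main obstacle: a trajectory issued from a point of $\Omega$ need not remain in $\Omega$ before entering $\mathcal N$, so one must take care in relating the value of $U$ at an arbitrary point to its values on $\mathcal N$ (the step is immediate, for instance, when $\Omega$ is forward invariant, as for a sublevel set of $V$).

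For part~2, fix a compact $K\subset\Omega$. Gronwall's inequality and local Lipschitzness of $f$ give $|\phi(t,x)-\phi(t,y)|\le Ce^{L_ft}|x-y|$ for $x,y\in K$, which is not integrable in $t$; so I would split $[0,\infty)=[0,T]\cup[T,\infty)$ and choose $T=T(K)$, using uniform attraction of $K$ to the origin (a consequence of asymptotic stability), so that $\phi(T,K)$ lies in a ball $\mathcal N_0$ about the origin on which the flow is exponentially contracting. Existence of such a ball follows from Assumption~\ref{as:f} via a linearization / quadratic-Lyapunov estimate on the variational equation and yields $|\phi(T+\tau,x)-\phi(T+\tau,y)|\le Me^{-\lambda\tau}|\phi(T,x)-\phi(T,y)|$. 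Combining the two bounds with the local Lipschitz constant of $\omega$ and integrating gives $|V(x)-V(y)|\le C_K|x-y|$, so $V$ is locally Lipschitz on $\Omega$. Rademacher's theorem then gives differentiability a.e., and at a point of differentiability, dividing the dynamic-programming identity by $t$ and letting $t\to 0^+$ (using $\phi(t,x)=x+tf(x)+o(t)$) yields $DV(x)\cdot f(x)=-\omega(x)$.

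For part~3, part~2 (applicable since $\omega\in C^1$ is locally Lipschitz) already gives $V$ locally Lipschitz; to upgrade to $C^1$ I would differentiate (\ref{eq:V}) under the integral sign. Since $f\in C^1$, $x\mapsto\phi(t,x)$ is $C^1$ with $D_x\phi$ solving the variational equation, and the exponential-contraction estimate of part~2 shows $\|D_x\phi(t,\cdot)\|$ is bounded on $K$ and decays like $e^{-\lambda t}$ for $t\ge T(K)$; with continuity of $D\omega$ this makes $\int_0^\infty D\omega(\phi(t,x))\,D_x\phi(t,x)\,dt$ converge locally uniformly in $x$, so $V\in C^1(\Omega)$ with this gradient. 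Using the flow identity $D_x\phi(t,x)f(x)=f(\phi(t,x))$ one then gets $DV(x)\cdot f(x)=\int_0^\infty\frac{d}{dt}\omega(\phi(t,x))\,dt=-\omega(x)$, or one may simply invoke part~2 and continuity. Finally, if $W\in C^1(\Omega)$ solves (\ref{eq:lyap}) with $W(0)=0$, the classical chain rule gives $\frac{d}{dt}W(\phi(t,x))=-\omega(\phi(t,x))$ along trajectory arcs in $\Omega$, so $W$ obeys the dynamic-programming identity and uniqueness reduces to the propagation step already identified as the crux.
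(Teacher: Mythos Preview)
Your existence argument for part~1 and your arguments for parts~2 and~3 match the paper's approach almost verbatim: the paper verifies the viscosity property with the same test-function computation (drawing on Proposition~\ref{prop:V}(4)), proves local Lipschitzness via the same split $[0,T]\cup[T,\infty)$ with Gronwall on the finite piece and a linearization/quadratic-Lyapunov contraction estimate on the tail, and obtains $C^1$ regularity by showing that the candidate gradient $J(x)=\int_0^\infty D\omega(\phi(t,x))\,\Phi(t,x)\,dt$ is well defined, continuous, and equal to $DV$, with exponential decay of the fundamental matrix $\Phi$ obtained exactly as you outline. Rademacher is invoked just as you propose.

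The one substantive divergence is uniqueness in part~1. The paper does not argue directly: it cites the optimality principle in \cite{bardi1997optimal} (Proposition~II.5.18, Theorem~III.2.33, Remark~III.2.34) and moves on. Your route---show that every continuous viscosity solution obeys the dynamic-programming identity on trajectory arcs contained in $\Omega$, so that $U=W-V$ is arc-wise constant, and then propagate $U\equiv 0$ outward from a forward-invariant neighbourhood---is more self-contained, but the obstacle you flag is real and not a technicality. For a general open $\Omega\subset\D$, a trajectory from $x\in\Omega$ may exit $\Omega$ before reaching $\mathcal N$, and the identity gives no link across the gap; already for disconnected $\Omega$ (take $\dot x=-x$, $\omega(x)=x^2$, $\Omega=(-1,1)\cup(2,3)$) one can add an arbitrary constant to $W$ on the far component and still have a $C^1$ solution with $W(0)=0$, so the statement as written requires at least a connectedness hypothesis, and even then your propagation step does not obviously close. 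Your argument \emph{does} go through cleanly when $\Omega$ is forward invariant---in particular for $\Omega=\D$, the case of principal interest---and that is worth recording. For general $\Omega$ you would have to either impose such a hypothesis or fall back on the comparison/optimality machinery the paper cites.
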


A proof of Proposition \ref{prop:lyap} can be found in Appendix \ref{sec:proof_prop_lyap}. Clearly, Proposition \ref{prop:lyap} holds with $\Omega=\D$. We have formulated the result as is because it is relevant when solving (\ref{eq:lyap}) on a given set $\Omega\subset\D$ without knowing $\D$.  We provide a few examples to illustrate Proposition \ref{prop:lyap}. 

\begin{example}
    Consider a scalar system $\dot x = -x^3$. Define $\omega(x)=x^2$. Then $V$ defined by (\ref{eq:V}) is $V(x)=\int_0^\infty \frac{x^2}{2tx^2+1}dt$, which does not converge for any $x\ne 0$. This is because Assumption \ref{as:omega} does not hold. 
\end{example}

\begin{example}
    Consider again the scalar system $\dot x = -x^3$. Define $\omega(x)=\abs{x}^{\frac52}$. Then $V$ defined by (\ref{eq:V}) is $V(x)=\int_0^\infty \frac{\abs{x}^{\frac{5}{2}}}{(2tx^2+1)^{\frac54}}dt = 2\sqrt{\abs{x}}$, which is not locally Lipschitz at $x=0$. Note that, while Assumption \ref{as:omega} holds and $\omega$ is locally Lipschitz, Assumption \ref{as:f} does not hold. Hence, Proposition \ref{prop:lyap}(2) is not applicable. When $x\ne 0$, $V$ is differentiable and (\ref{eq:lyap}) holds in classical sense. At $x=0$, we have $D^-V(0)=\Real$ (see Appendix \ref{sec:proof_prop_lyap} for definition). For any $p\in\Real$ and $x=0$, we have $-\omega(0)-p\cdot f(0)=0$, which verifies (\ref{eq:lyap}) is satisfied at $x=0$ in viscosity sense.  
\end{example}

\begin{example}
    Consider the scalar system $\dot x = -x$. 
    Let $\omega(x)=\abs{x}$. Then $V$ defined by (\ref{eq:V}) is $V(x)=\int_0^\infty \abs{e^{-t} x}dt = \abs{x}$. Hence, both $V$ and $\omega$ are only locally Lipschitz (and not differentiable) at $0$. 
    Furthermore, if $\omega_2(x)=\abs{x}^{\frac{3}{2}}$, then 
    $V_2(x)=\int_0^\infty \abs{e^{-t}x}^{\frac{3}{2}}dt = \frac{2}{3}\abs{x}^{\frac32}$. Similarly, both $V_2$ and $\omega_2$ are continuously differentiable (and not twice continuously differentiable) at $0$. 
    This is consistent with items (2) and (3) of Proposition \ref{prop:lyap}.
\end{example}

\begin{remark}
Item (1) of Proposition \ref{prop:lyap} can be seen as a special case of the results stated in \cite{bardi1997optimal,camilli2001generalization}. Here, we provide a more direct proof. Local Lipschitz continuity of $V$ (in a more general setting with perturbation) has been established in \cite{camilli2001generalization}, but under more restrictive conditions. Item (3) is perhaps not surprising, yet we are not aware of a similar result being stated in the literature.
\end{remark}

\subsection{Zubov equation}

While Proposition \ref{prop:lyap} characterizes the Lyapunov function $V$ within the domain of attraction using Lyapunov's PDE (\ref{eq:pde}), due to Proposition \ref{prop:V}(2), any finite sublevel set of $V$ does not yield the domain of attraction. Zubov's theorem is a well-known result that states the sublevel-1 set of a certain Lyapunov function is equal to the domain of attraction. We state Zubov's theorem below. 

\begin{theorem}[Zubov's theorem \cite{zubov1964methods}]\label{thm:zubov}
    Let $\Omega\subset\Real^n$ be an open set containing the origin. Then $\Omega=\D$ if and only if there exists two continuous functions $W:\,\Omega\ra \Real$ and $\Psi:\,\Omega\ra \Real$ such that the following conditions hold:
 \begin{enumerate}
     \item $0<W(x)<1$ for all $x\in \Omega\setminus\set{0}$ and $W(0)=0$; 
     \item $\Psi$ is positive definite on $\Omega$ with respect to the origin;  
     \item for any sufficiently small $c_3>0$, there exist two positive real numbers $c_1$ and $c_2$ such that $\abs{x}\ge c_3$ implies $W(x)>c_1$ and $\Psi(x)>c_2$; 
     \item $W(x)\ra 1$ as $x\ra y$ for any $y\in \partial \Omega$;
     \item $W$ and $\Psi$ satisfy
     \begin{equation}\label{DV:zubov}
     \dot W (x) = -\Psi(x)(1-W(x)), 
     \end{equation}
     where $\dot W$ is the right-hand derivative of $W$ along solutions of (\ref{eq:sys}) as defined in (\ref{eq:dini_V}).  
 \end{enumerate}
\end{theorem}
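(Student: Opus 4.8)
The plan is to prove both implications by transferring between Zubov's equation (\ref{DV:zubov}) and the Lyapunov equation (\ref{eq:lyap}) through the substitution $W=1-e^{-V}$ (equivalently $V=-\log(1-W)$), so that Proposition \ref{prop:V} can be applied directly; the chain rule along trajectories turns (\ref{DV:zubov}) into $\dot V=-\omega$ with $\omega=\Psi$, since $v\mapsto-\log(1-v)$ is $C^1$ on $[0,1)$.

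\emph{Sufficiency.} Suppose $W,\Psi$ satisfying (1)--(5) exist on an open $\Omega\ni0$, and put $\omega:=\Psi$ with $V$ defined by (\ref{eq:V}). The first step is to check that $\omega=\Psi$ satisfies Assumption \ref{as:omega}: item (i) is precisely condition (3), and for (ii)--(iii), local asymptotic stability of the origin keeps the orbit of any $x$ near $0$ inside $\Omega$ with $\phi(t,x)\to0$, so along it $t\mapsto-\log(1-W(\phi(t,x)))$ is $C^1$ with derivative $-\Psi(\phi(t,x))$ and tends to $0$; integrating yields $V(x)=\int_0^\infty\Psi(\phi(t,x))\,dt=-\log(1-W(x))$, finite and vanishing as $x\to0$. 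Next I would establish that $\Omega$ is forward invariant with complete orbits --- $W$ is nonincreasing along orbits by (5) and (4) keeps orbits off $\partial\Omega$ --- and, using the same telescoping, that $V=-\log(1-W)$ throughout $\Omega$; hence $V$ is finite on $\Omega$, so Proposition \ref{prop:V}(1) gives $\Omega\subseteq\D$. For $\D\subseteq\Omega$ I would argue by contradiction: if $x\in\D\setminus\Omega$, then since $\phi(t,x)\to0\in\Omega$ and $\Omega$ is forward invariant the orbit has a last exit point $y=\phi(t^*,x)\in\partial\Omega\cap\D$ with $\phi(s,y)\in\Omega$ for every $s>0$; then $V(\phi(s,y))=-\log(1-W(\phi(s,y)))\to\infty$ as $s\to0^+$ by (4), whereas $V(\phi(s,y))=\int_s^\infty\Psi(\phi(u,y))\,du\le V(y)<\infty$ by Proposition \ref{prop:V}(1) since $y\in\D$ --- a contradiction. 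Thus $\Omega=\D$.

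\emph{Necessity.} Suppose $\Omega=\D$. I would take $\omega(x):=\min\{1,g(|x|)\}$ for a class-$\mathcal K_\infty$ function $g$, so that Assumption \ref{as:omega}(i) holds automatically, the only real issue being to make $g$ decay fast enough at the origin that (\ref{eq:V}) converges there. Writing the $\mathcal{KL}$-estimate $|\phi(t,x)|\le\beta(|x|,t)$ of the locally asymptotically stable origin and invoking Sontag's lemma $\beta(r,t)\le\theta_1(\theta_2(r)e^{-t})$ with $\theta_1,\theta_2\in\mathcal K_\infty$, the choice $g:=\theta_1^{-1}$ yields $\omega(\phi(t,x))\le\theta_2(|x|)e^{-t}$ near $0$, securing (ii)--(iii). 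Proposition \ref{prop:V} then delivers $V$ from (\ref{eq:V}) that is positive definite and continuous on $\D$, with $V(x)\to\infty$ as $x\to\partial\D$ and Dini derivative $\dot V=-\omega$. Setting $W:=1-e^{-V}$ and $\Psi:=\omega$: conditions (1), (2), (4) are immediate; (5) is $\dot W=e^{-V}\dot V=-(1-W)\Psi$; and (3) is clear for $\Psi=\omega$, while for $W$ it reduces to $\inf\{V(x):x\in\D,\ |x|\ge c_3\}>0$, which I would obtain from positive definiteness and continuity of $V$ on compacta together with $V\to\infty$ near $\partial\D$, plus a fixed-annulus estimate (using boundedness of $f$ on $\{R\le|x|\le R+1\}$) to cover points of $\D$ far from $0$. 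Since $\Omega=\D$, the required pair is produced.

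\emph{Main obstacle.} I expect the crux to be the necessity direction's construction of $\omega$: forcing (\ref{eq:V}) to converge in a neighbourhood of the origin when the origin is only asymptotically --- not exponentially --- stable is exactly Zubov's original difficulty and seems to demand a $\mathcal{KL}$/Sontag-lemma argument rather than a direct estimate. A secondary technical burden, in the sufficiency direction, is the topology: proving forward invariance and completeness of orbits in $\Omega$, extracting the last-exit boundary point, and controlling behaviour at infinity when $\D$ is unbounded, where (4) is vacuous and one must instead lean on (3).
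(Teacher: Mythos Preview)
The paper does not prove Theorem~\ref{thm:zubov}: it is stated as a classical result from Zubov's monograph and cited without proof. The nearest the paper comes is Proposition~\ref{prop:W}, which asserts (with proof deferred to the preliminary work \cite{liu2023towards}) that the $W$ defined in~(\ref{eq:W}) satisfies conditions (1)--(5) on $\D$ under Assumption~\ref{as:omega} --- essentially the necessity half only. Your necessity construction via $W=1-e^{-V}$, $\Psi=\omega$ is precisely the special case $\psi\equiv1$, $\beta(s)=1-e^{-s}$ of the paper's more general transform~(\ref{eq:beta})--(\ref{eq:W}), and your Sontag-lemma argument to secure Assumption~\ref{as:omega}(ii)--(iii) under mere asymptotic stability is a genuine addition beyond what the paper verifies (the remark after Assumption~\ref{as:omega} checks it only under exponential stability). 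The sufficiency direction is not treated in the paper at all, so there is nothing to compare against there.

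Your plan is sound and well aligned with the paper's machinery. Two points in the sufficiency direction deserve care. First, you set $\omega:=\Psi$, but $\Psi$ is defined only on $\Omega$, whereas Assumption~\ref{as:omega} and Proposition~\ref{prop:V} are stated for a globally defined $\omega$; either extend $\Psi$ continuously or argue directly from condition~(3) rather than invoking Proposition~\ref{prop:V}(1). Second, watch the logical order in the telescoping step: integrating $\frac{d}{dt}\bigl[-\log(1-W(\phi(t,x)))\bigr]=-\Psi(\phi(t,x))$ over $[0,T]$ already yields $\int_0^T\Psi\,dt\le -\log(1-W(x))<\infty$ \emph{before} you know $\phi(t,x)\to0$; finiteness of the integral together with condition~(3) and local asymptotic stability then forces convergence to the origin, which is what gives $\Omega\subset\D$. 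Presenting it in this order removes the circularity latent in your sketch (where you seem to use $\phi(t,x)\to0$ to evaluate the integral that is supposed to prove $\phi(t,x)\to0$).
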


We first highlight a connection between $V$ defined by (\ref{eq:V}), with properties listed in Proposition \ref{prop:V}, and the function $W$ in Zubov's theorem.  Let $\beta:\,\Real\ra\Real$ satisfy 
\begin{equation}\label{eq:beta}
    \dot \beta =  (1-\beta)\psi(\beta),\quad \beta(0)=0,
\end{equation}
where $\psi$ is a locally Lipschitz function satisfying $\psi(s)>0$ for $s\ge 0$. Clearly, any function satisfying (\ref{eq:beta}) is continuously differentiable, strictly increasing on $[0,\infty)$, and satisfies $\beta(0)=0$ and  $\beta(s)\ra 1$ as $s\ra \infty$. 

With $V$ defined by (\ref{eq:V}), let 
\begin{equation}
    \label{eq:W}
W(x) = \left\{\begin{aligned}
&\beta(V(x)),\text{ if } V(x)<\infty,\\
&1, \quad  \text{otherwise},
\end{aligned}\right.
\end{equation}
where $\beta:\,[0,\infty)\ra\Real$ satisfies (\ref{eq:beta}). 

We can verify the following properties for $W$, a proof of which can be found in \cite{liu2023towards}. 

\begin{proposition}\label{prop:W}
The function $W:\,\Real^n\ra\Real$ defined by (\ref{eq:W}) is continuous and satisfies the conditions in Theorem \ref{thm:zubov} on $\D$ with $\Psi(x) = \psi(\beta(V(x)))\omega(x)$. 
\end{proposition}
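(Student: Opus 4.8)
The plan is to verify, one at a time, the five numbered conditions of Theorem~\ref{thm:zubov} with $\Omega=\D$, relying on the properties of $V$ recorded in Proposition~\ref{prop:V} and on the elementary properties of the solution $\beta$ of (\ref{eq:beta}) already noted in the text: $\beta\in C^1([0,\infty))$ is strictly increasing, $\beta(0)=0$, $0\le\beta(s)<1$ on $[0,\infty)$, and $\beta(s)\to 1$ as $s\to\infty$; thus $\beta$ is a continuous increasing bijection of $[0,\infty)$ onto $[0,1)$. Conditions (1), (2), (4), and the asserted continuity of $W$ are then essentially immediate. Since $V$ is positive definite and continuous on $\D$ with $V(0)=0$ (Proposition~\ref{prop:V}(3)--(4)), composing with $\beta$ gives $W(0)=0$, $0<W(x)<1$ for $x\in\D\setminus\{0\}$, and continuity of $W$ on $\D$; since $\psi>0$ on $[0,\infty)$ and $\omega$ is positive definite, $\Psi(x)=\psi(\beta(V(x)))\omega(x)$ is positive definite (and continuous) on $\D$; and since $V(x)\to\infty$ as $x\to y\in\partial\D$ (Proposition~\ref{prop:V}(2)) while $\beta(s)\to 1$, we get $W(x)\to 1$ as $x\to y\in\partial\D$, which together with $W\equiv 1$ off $\D$ also gives continuity of $W$ across $\partial\D$.

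For condition (5) I would first sharpen Proposition~\ref{prop:V}(4) along trajectories: by the semigroup property, for $x\in\D$ one has $V(\phi(s,x))=\int_s^\infty\omega(\phi(u,x))\,du$, so $s\mapsto V(\phi(s,x))$ is $C^1$ with derivative $-\omega(\phi(s,x))$. Hence $t\mapsto W(\phi(t,x))=\beta(V(\phi(t,x)))$ is $C^1$ on $[0,\infty)$, and the classical chain rule, combined with the identity $\beta'(s)=(1-\beta(s))\psi(\beta(s))$ from (\ref{eq:beta}) evaluated at $s=V(x)$, yields at $t=0$ the right-hand derivative $\dot W(x)=-\beta'(V(x))\,\omega(x)=-(1-W(x))\,\psi(\beta(V(x)))\,\omega(x)$, which is precisely (\ref{DV:zubov}) with the claimed $\Psi$.

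The only genuinely non-trivial point is condition (3), specifically the uniform lower bound $W(x)>c_1$ on the possibly unbounded set $\{x\in\D:\abs{x}\ge c_3\}$. Since $\beta$ is increasing, this reduces to showing $m:=\inf\{V(x):x\in\D,\ \abs{x}\ge c_3\}>0$, and I would argue by contradiction. Suppose $x_k\in\D$ with $\abs{x_k}\ge c_3$ and $V(x_k)\to 0$. Flowing forward, let $s_k\ge 0$ be the first time with $\abs{\phi(s_k,x_k)}=c_3/2$ (which exists because $\abs{\phi(0,x_k)}\ge c_3$ and $\phi(t,x_k)\to 0$), and put $y_k=\phi(s_k,x_k)$. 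Nonnegativity of $\omega$ and the semigroup property give $V(y_k)=\int_{s_k}^\infty\omega(\phi(u,x_k))\,du\le V(x_k)\to 0$, whereas the $y_k$ lie on the compact sphere $\{\abs{y}=c_3/2\}$, hence admit a subsequential limit $y^*$ with $\abs{y^*}=c_3/2$. If $y^*\in\D$, continuity and positive definiteness of $V$ on $\D$ force $V(y^*)=0$ and hence $y^*=0$, a contradiction; if $y^*\in\partial\D$, Proposition~\ref{prop:V}(2) forces $V(y_k)\to\infty$, again a contradiction. Thus $m>0$ and $W(x)\ge\beta(m)=:c_1>0$ on $\{x\in\D:\abs{x}\ge c_3\}$. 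The companion bound $\Psi(x)>c_2$ is then routine: $\omega(x)>c$ there by Assumption~\ref{as:omega}(i), and $\psi(\beta(V(x)))\ge\min_{[\beta(m),1]}\psi>0$ because $\beta(V(x))\in[\beta(m),1)$. I expect this ``flow to a sphere'' compactness argument to be the main obstacle; the remaining steps are bookkeeping with Proposition~\ref{prop:V} and the scalar ODE (\ref{eq:beta}).
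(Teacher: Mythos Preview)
Your argument is correct. The paper itself does not give a proof of this proposition; it simply refers the reader to \cite{liu2023towards}. So there is no in-paper proof to compare against, only the expectation that the verification is routine modulo one point.

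You have identified that point correctly: conditions (1), (2), (4), (5) and the global continuity of $W$ really are bookkeeping with Proposition~\ref{prop:V} and the scalar ODE (\ref{eq:beta}), and your treatment of them is fine (in particular, your observation that $s\mapsto V(\phi(s,x))$ is $C^1$ with derivative $-\omega(\phi(s,x))$ makes the chain rule for $\dot W$ honest, rather than relying on a one-sided Dini derivative chain rule). The only substantive step is the uniform lower bound in condition (3), and your ``flow to the sphere $\{\abs{y}=c_3/2\}$'' compactness argument is exactly the right idea: it sidesteps the difficulty that $\{x\in\D:\abs{x}\ge c_3\}$ can be unbounded by trading each $x_k$ for a point $y_k$ on a fixed compact sphere with $V(y_k)\le V(x_k)$, and then the dichotomy $y^*\in\D$ versus $y^*\in\partial\D$ (using Proposition~\ref{prop:V}(4) and (2) respectively) closes the contradiction. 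Note that the naive alternative---bounding $V(x)\ge c\tau$ via Assumption~\ref{as:omega}(i), where $\tau$ is the hitting time of the sphere---fails without your compactness step, since $\tau$ need not be uniformly bounded below when the $x_k$ wander to infinity. Your lower bound for $\Psi$ via $\min_{[\beta(m),1]}\psi>0$ is also correct, since $\psi$ is continuous and positive on the compact interval $[\beta(m),1]\subset[0,\infty)$.
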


Motivated by this, we consider a slightly generalized Zubov equation as follows 
\begin{equation}
    \label{eq:zubov}
    DW \cdot f = - \omega \psi(W)(1-W),  
\end{equation}
for $x\in \Omega$, where $\Omega\subset\Real^n$ is an open set on which we are interested in solving (\ref{eq:zubov}). 

Next, we characterize solutions of Zubov's equation (\ref{eq:zubov}) for the case where $\Omega$ is bounded and a boundary condition on $\partial\Omega$ is specified as follows 
\begin{equation}
    \label{eq:zubov_boundary}
    W = g,
\end{equation}
where $g$ is taken to be consistent with (\ref{eq:W}), i.e., $g(x)=1$ for $x\in \partial\Omega\setminus\D$ and $g(x)=\beta(V(x))$ for $x\in \partial\Omega\cap \D$.
We put forward a technical assumption on $\psi$. 

\begin{assumption}\label{as:psi}
There exists a nonempty interval $I$ such that the function $G:\,I\ra\Real$ defined by $s\mapsto (1-s)\psi(s)$ is monotonically decreasing on $I$. 
\end{assumption}

Assumption \(\ref{as:psi}\) on \(\psi\) stipulates a nonempty interval \(I\), which depends on \(\psi\) and serves as the range of the solution \(W\) to (\ref{eq:zubov}), as required by the following result.

\begin{theorem}\label{thm:zubov_pde}
    Let Assumptions \ref{as:omega} and \ref{as:psi} hold. Let $\Omega\subset\Real^n$ be a bounded open set containing the origin. Suppose that $\omega$ is locally Lipschitz. The following statements are true:
    \begin{enumerate}
        \item $W$ defined by (\ref{eq:W}) is the unique viscosity solution to (\ref{eq:zubov}) in  $C(\bar\Omega,I)$ satisfying $W(0)=0$ and the boundary condition (\ref{eq:zubov_boundary}) on $\partial \Omega$.        
        \item If Assumption \ref{as:f} holds, then $W$ defined by (\ref{eq:W}) is locally Lipschitz on  $\Real^n\setminus \partial \D$ and satisfies (\ref{eq:zubov}) almost everywhere on $\Real^n\setminus \partial \D$. 
        \item If $\omega\in C^1(\Real^n)$ and Assumption \ref{as:f} holds, then $W$ defined by (\ref{eq:W}) is in $C^1(\Real^n\setminus \partial \D)$. 
    \end{enumerate}    
\end{theorem}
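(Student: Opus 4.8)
The plan is to reduce the whole theorem to the regularity of the Lyapunov value function $V$ established in Proposition \ref{prop:lyap}, via the change of variables $W=\beta(V)$ of (\ref{eq:W}), where $\beta$ solves the scalar ODE (\ref{eq:beta}). Since $\psi>0$ on $[0,\infty)$ and $\beta<1$, we have $\beta'=(1-\beta)\psi(\beta)>0$, so $\beta:[0,\infty)\to[0,1)$ is a $C^1$ increasing bijection with $C^1$ inverse. The one genuinely new ingredient is a composition lemma for viscosity solutions: a continuous $u:U\to[0,\infty)$ on an open set $U$ is a viscosity solution of the Lyapunov equation $Du\cdot f=-\omega$ on $U$ if and only if $\beta(u)$ is a viscosity solution of (\ref{eq:zubov}) on $U$; conversely a continuous $v:U\to[0,1)$ solves (\ref{eq:zubov}) in the viscosity sense iff $\beta^{-1}(v)$ solves the Lyapunov equation. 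The proof is the standard test-function argument: a $C^1$ function $\varphi$ touches $\beta(u)$ from above (resp.\ below) at $x_0$ exactly when $\beta^{-1}\circ\varphi$ touches $u$ from above (resp.\ below) at $x_0$, because $\beta^{-1}$ is $C^1$ and strictly increasing; and the ODE (\ref{eq:beta}) identifies the two Hamiltonians, since $D(\beta^{-1}\circ\varphi)(x_0)=D\varphi(x_0)/\beta'(u(x_0))$ and $\beta'(u(x_0))=(1-\beta(u(x_0)))\psi(\beta(u(x_0)))$ is precisely the coefficient appearing on the right-hand side of (\ref{eq:zubov}).

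With this lemma I would prove item (1) in two steps. Existence: on the open set $\Omega\cap\D$, Proposition \ref{prop:lyap}(1) gives that $V$ is a viscosity solution of the Lyapunov equation, so $W=\beta(V)$ is a viscosity solution of (\ref{eq:zubov}) there; on the open set $\Omega\setminus\bar\D$ we have $W\equiv1$ and both sides of (\ref{eq:zubov}) vanish, so $W$ is a classical (hence viscosity) solution; and at $y\in\Omega\cap\partial\D$ the viscosity inequalities are checked directly from the invariance of $\partial\D$ under the flow --- a $C^1$ test function touching $W$ from below at $y$ attains a local maximum at $y$ (since $W<1=W(y)$ on $\D$ near $y$ and $W\le1$ elsewhere), so its gradient vanishes and the supersolution inequality becomes $0\ge0$; a $C^1$ test function $\varphi$ touching $W$ from above at $y$ satisfies $\varphi(\phi(t,y))\ge W(\phi(t,y))=1=\varphi(y)$ along $t\mapsto\phi(t,y)\in\partial\D$, which forces $D\varphi(y)\cdot f(y)=0$ and gives the subsolution inequality. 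Hence $W$ is a viscosity solution of (\ref{eq:zubov}) on $\Omega$; it is continuous on $\Real^n$ with values in $I$, so $W\in C(\bar\Omega,I)$, $W(0)=0$, and $W=g$ on $\partial\Omega$. Uniqueness: if $\tilde W\in C(\bar\Omega,I)$ is another solution with $\tilde W(0)=0$ and $\tilde W=g$ on $\partial\Omega$, then wherever $\tilde W<1$ the function $\beta^{-1}(\tilde W)$ is a continuous viscosity solution of the Lyapunov equation vanishing at the origin, so $\beta^{-1}(\tilde W)=V$ by the uniqueness in Proposition \ref{prop:lyap}(1), whence $\tilde W=W$; in general --- when $\tilde W$ saturates at the upper endpoint of $I$ on part of $\Omega$ --- I would instead invoke a comparison principle for (\ref{eq:zubov}), and this is exactly where Assumption \ref{as:psi}, the monotone decrease of $G(s)=(1-s)\psi(s)$ on $I$, is used, together with the data fixed at the origin and on $\partial\Omega$; the argument is the standard doubling-of-variables technique of \cite{crandall1983viscosity,bardi1997optimal}.

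Items (2) and (3) then follow from the corresponding items of Proposition \ref{prop:lyap} applied with $\Omega=\D$, plus the chain rule. For (2): under Assumption \ref{as:f} with $\omega$ locally Lipschitz, $V$ is locally Lipschitz on $\D$ and satisfies $DV\cdot f=-\omega$ a.e.; since $\beta\in C^1$, $W=\beta(V)$ is locally Lipschitz on $\D$ and, at a.e.\ $x\in\D$, $DW\cdot f=\beta'(V)\,DV\cdot f=-\beta'(V)\,\omega=-\omega\,\psi(W)(1-W)$, which is (\ref{eq:zubov}); on $\Real^n\setminus\bar\D$, $W\equiv1$ is smooth and solves (\ref{eq:zubov}) classically; since $\D$ and $\Real^n\setminus\bar\D$ are disjoint open sets whose union is $\Real^n\setminus\partial\D$, $W$ is locally Lipschitz on $\Real^n\setminus\partial\D$ and satisfies (\ref{eq:zubov}) a.e.\ there. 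For (3): adding $\omega\in C^1(\Real^n)$ yields $V\in C^1(\D)$ by Proposition \ref{prop:lyap}(3), hence $W=\beta(V)\in C^1(\D)$, while $W\equiv1\in C^1(\Real^n\setminus\bar\D)$, so $W\in C^1(\Real^n\setminus\partial\D)$.

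The step I expect to be the main obstacle is the uniqueness half of item (1): the Zubov Hamiltonian is not monotone in the unknown in the direction that textbook comparison theorems require --- Assumption \ref{as:psi} in fact makes it monotone the other way --- so the comparison argument has to be adapted to the geometry of the characteristic flow together with the value fixed at the origin and the Dirichlet data on $\partial\Omega$. One must also be careful near $\partial\D$, where $W$ saturates at the endpoint of $I$ and the inverse change of variables $\beta^{-1}$ degenerates, so the reduction to the Lyapunov equation is unavailable there and the direct viscosity checks above are needed. Existence and items (2)--(3) are, by contrast, routine once the composition lemma is in place.
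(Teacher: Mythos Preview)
Your proposal follows essentially the same route as the paper: verify that $W$ is a viscosity solution (the paper does this directly via the Dini-derivative identity of Proposition \ref{prop:V} rather than through a separate composition lemma, but the two arguments are equivalent), establish uniqueness by combining the $\beta^{-1}$-reduction to the Lyapunov equation near the origin with a doubling-of-variables comparison away from it, and derive items (2)--(3) from Proposition \ref{prop:lyap} via the chain rule.

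Two points to sharpen. First, your ``wherever $\tilde W<1$'' reduction does not by itself give uniqueness, because Proposition \ref{prop:lyap}(1) requires the domain to lie inside $\D$, and the open set $\{\tilde W<1\}$ is not known a priori to be contained in $\D$. The paper avoids this by applying the $\beta^{-1}$-reduction only on a small ball $\bar\B_\delta\subset\Omega\cap\D$, which pins down $\tilde W=W$ on $\partial\B_\delta$, and then runs the comparison argument on $\Omega'=\Omega\setminus\bar\B_\delta$ with Dirichlet data on $\partial\Omega'=\partial\Omega\cup\partial\B_\delta$; this two-step split is also what makes $\omega$ bounded below on the comparison domain, permitting the division by $\omega$ that the argument needs. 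Second, your diagnosis of the monotonicity is slightly off: Assumption \ref{as:psi} makes $F_Z(x,u,p)=-\omega(x)G(u)-p\cdot f(x)$ \emph{nondecreasing} in $u$ wherever $\omega>0$, which is the proper direction for comparison. The real obstruction to a textbook comparison theorem is not the sign but the degeneracy of this monotonicity at the origin, where $\omega$ vanishes --- and that is exactly why the origin must be treated first via the Lyapunov reduction before the comparison is run on the punctured domain.
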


A proof of Theorem \ref{thm:zubov_pde} can be found in Appendix \ref{sec:proof_prop_zubov}. 
An interesting byproduct of the analysis in the proof of Theorem \ref{thm:zubov_pde} is the following error estimate for a neural (or other) approximate solution to the PDE (\ref{eq:zubov}). To derive the result, we refer to the following technical condition on an approximate solution $v:\,\bar\Omega\ra I$. Let $G$ and $I$ be from Assumption \ref{as:psi}. For some $\eps>0$ and $\delta>0$, we have 
    \begin{equation}
       \label{eq:v_eps}
   G(v(x)) + [-\frac{\eps}{\min_{x\in\bar\Omega\setminus\B_{\delta}}\omega(x)}, \frac{\eps}{\min_{x\in\bar\Omega\setminus\B_{\delta}}\omega(x)}]\subset G(I)     
    \end{equation}
    for all $x\in \Omega\setminus\bar\B_\delta$. We shall see that, for any fixed $\delta>0$,  condition (\ref{eq:v_eps}) holds for $\eps>0$ sufficiently small, provided that a mild condition holds for $v$ (see Remark \ref{rem:psi}).

\begin{proposition}\label{prop:error}
    Suppose that the assumptions of Theorem \ref{thm:zubov_pde} hold. Fix any $\delta>0$ such that $\B_\delta\subset\Omega\cap\D$. 
    Let $W$ be the unique viscosity solution to (\ref{eq:zubov}) with boundary condition (\ref{eq:zubov_boundary}). The following statements hold:
    \begin{enumerate}
        \item   (\textbf{error estimate})  For any $\eps>0$, let $v$ be an $\eps$-approximate viscosity solution (see its definition in Appendix \ref{sec:proof_prop_error}) to (\ref{eq:zubov}) on $\Omega\setminus\bar\B_\delta$ with boundary error $\abs{W(x)-v(x)}\le \eps_b$ on $\partial\Omega \cup \partial \bar\B_{\delta}$. Assume that $v$ and $\eps$ satisfy condition (\ref{eq:v_eps}).
    Then there exists a constant $C(\eps,\eps_b)$, satisfying $C(\eps,\eps_b)\ra 0$ as $\eps\ra 0$ and $\eps_b\ra 0$, such that $\abs{W(x)-v(x)}\le C(\eps,\eps_b)$ for all $x\in\bar\Omega\setminus\B_\delta$. 

    \item (\textbf{convergence}) Furthermore, suppose that $\set{v_n}$ is a sequence of $\eps_n$-approximate viscosity solutions to (\ref{eq:zubov}) on $\Omega$ with a uniform Lipschitz constant\footnote{The same statement holds if they share a uniform modulus of continuity. 
    } on $\bar\Omega$, where $\eps_n\downarrow 0$, and with boundary condition $v_n(0)\ra 0$ and $v_n(x)\ra W(x)$ uniformly on $\partial \Omega$ as $n\ra \infty$. Assume that, for each $\delta>0$, there exists some $N>0$ such that (\ref{eq:v_eps}) holds for all $(v_n,\eps_n)$ with $n>N$. Then $\set{v_n}$ converges to $W$ uniformly on $\bar\Omega$.  
    \end{enumerate}
\end{proposition}

\begin{remark}\label{rem:psi}
    Two special cases of $\psi(s)$ are given by (i) $\psi(s)=\alpha$ or (ii) $\psi(s)=\alpha(1+s)$ for some constant $\alpha>0$, which correspond to $\beta(s)=1-\exp(-\alpha s)$ and $\beta(s)=\tanh(\alpha s)$,  respectively. Such transforms are used in, e.g., \cite{camilli2001generalization,kang2021data,meng2023learning}. 
    For case (i), Assumption \ref{as:psi} holds with $I=\Real$. It is clear that in this case $G(I)=\Real$ and (\ref{eq:v_eps}) trivially holds. For case (ii), we can take $I=[0,\infty)$. Then $G(I)=(-\infty,\alpha]$. Condition  (\ref{eq:v_eps}) for $v$ and $\eps$ in Proposition \ref{prop:error} holds if there exists some $c>0$ such that $v(x)>c$, whenever $\abs{x}>\delta$, and $\eps>0$ sufficiently small such that $G(c)+\frac{\eps}{\min_{x\in\bar\Omega\setminus\B_{\delta}}\omega(x)}\le \alpha$. It further follows that (\ref{eq:v_eps}) holds for $(v_n,\eps_n)$ in Proposition \ref{prop:error}, if for each $\delta>0$, there exists $c>0$ and $N>0$ such that $v_n(x)>c$ for all $\abs{x}>\delta$ and all $n>N$. 
\end{remark}

\begin{example}
    Consider the scalar system
    $\dot x = -x + x^3$. It has three equilibrium points at $\set{0,\pm 1}$. The origin is exponentially stable with domain of attraction $\D=(-1,1)$. Consider 
    $
    V(x) = \int_0^\infty \abs{\phi(t,x)}^{2}dt. 
    $
    By Proposition \ref{prop:V}, we have
    $
    \dot V(x) = V'(x)(-x+x^3) = - x^2. 
    $
    For $x\in (0,1)$, assuming differentiability of $V$, we have
    $
    V'(x) = \frac{x}{1-x^2}. 
    $
    Integrating this with the condition $V(0)=0$ gives 
    $
    V(x) = -\frac12 \ln(1-x^2). 
    $
    Taking $W(x)=1-\exp(-\alpha V(x))$ (corresponding to $\psi(x)=\alpha>0$ as indicated in Remark \ref{rem:psi}), we obtain
    $
    W(x) = 1 - (1-x^2)^{\frac{\alpha}{2}}.   
    $
    One can easily verify that $W$ satisfies conditions in Zubov's theorem. It can also be easily verified that $W$ can fail to be differentiable at $x\in\set{\pm 1} = \partial \D$, or even locally Lipschitz for $\alpha<2$. 
\end{example}

To conclude this section, we highlight the crucial connections of the theoretical analysis presented here with the training and verification of Lyapunov functions to be discussed in Sections \ref{sec:algo} and \ref{sec:verify}. The main result (Theorem \ref{thm:zubov_pde}) shows that solving Zubov's PDE (\ref{eq:zubov}) with boundary condition (\ref{eq:zubov_boundary}) can lead to a Lyapunov function \(W\), whose sublevel-1 set gives the true domain of attraction \(\D = \{x \in \Real^n : W(x) < 1\}\). This analysis fundamentally relies on the analysis of solutions to Lyapunov's PDE (\ref{eq:lyap}) (Proposition \ref{prop:lyap}). Building upon these theoretical results, subsequent sections will focus on computing approximate solutions by solving Zubov's PDE (\ref{eq:zubov}) using neural networks and verifying them with SMT solvers. To this end, Proposition \ref{prop:error} also plays an instrumental role in ensuring that solving Zubov's PDE (\ref{eq:zubov}) with small residual errors (\(\eps\)-approximate solutions) indeed leads to accurate approximations of the true solution.

\section{Physics-Informed Neural Lyapunov Function}\label{sec:algo}

A physics-informed neural network (PINN) \cite{lagaris1998artificial,raissi2019physics} is essentially a neural network that solves a PDE. In this section, we present the main algorithm for solving Lyapunov and Zubov equations. To describe the algorithm,  consider a first-order PDE of the form
\begin{equation}
    \label{eq:1st_order_PDE}
    F(x,W,DW) = 0,\quad x\in \Omega,
\end{equation}
subject to the boundary condition $W = g$ on $\partial\Omega$.

Consider a general multi-layer feedforward neural network function $W_N(x;\theta)$ defined inductively as follows. 
Let the output of the first layer (input layer) be $a^{(0)} = x$, where $x$ is the input vector. For each subsequent layer $l$ (where $1 \leq l \leq L$), the output is defined inductively as 
$
a^{(l)} = \sigma^{(l)}(H^{(l)} a^{(l-1)} + b^{(l)}), 
$
where $H^{(l)}$ and $b^{(l)}$ are the weight matrix and bias vector for layer $l$, and $\sigma^{(l)}$ is the activation function for layer $l$. The final output of the network is $y = a^{(L)}$. The parameter vector $\theta$ consists of all weights $H^{(l)}$ and biases $b^{(l)}$.

We train $W_N(x;\theta)$ as a PINN Lyapunov function by optimizing $\theta$ with respect to a specifically designed loss function that encodes the PDE (\ref{eq:1st_order_PDE}), along with the correct boundary condition and additional data loss. We describe the algorithm as follows. 
\begin{enumerate}
    \item Choose a set of interior collocation points $\set{x_i}_{i=1}^{N_c}\subset\Omega$. These are points at which the test solution $W_N(x;\beta)$ and its derivative will be evaluated to obtain the mean-square residual error $\frac{1}{N_c}\sum_{i=1}^{N_c} F(x_i,W_N(x_i;\beta),DW_N(x_i;\beta))^2$ for (\ref{eq:1st_order_PDE}) at these points.

    \item Choose a set of boundary points $\set{y_i}_{i=1}^{N_b}\subset\partial\Omega$ at which the mean-square boundary error $\frac{1}{N_b}\sum_{i=1}^{N_b}(W_N(y_i;\beta)-g(y_i))^2$ can be evaluated. 
    
    \item In some cases, we may also obtain a set of data points $\{(z_i,\hat{W}(z_i))\}_{i=1}^{N_d}$, where $\set{z_i}_{i=1}^{N_d}\subset \Omega$ and $\{\hat W(z_i)\}_{i=1}^{N_d}$ are approximations to the ground truth values of $W$ at $\set{z_i}_{i=1}^{N_d}$. For examples, this may correspond to evaluation of $V$ defined in (\ref{eq:V}) or $W$ defined in (\ref{eq:W}) through numerical integration of (\ref{eq:sys}). The mean-square data loss can be defined as 
    $\frac{1}{N_d}\sum_{i=1}^{N_d}(W_N(z_i;\beta)-\hat W(z_i))^2$. 
\end{enumerate}

The loss function for optimizing $\theta$ is given by
\begin{align}
    \text{Loss}(\theta) &= \frac{1}{N_c}\sum_{i=1}^{N_c} F(x_i,W_N(x_i;\theta),DW_N(x_i;\theta))^2 \notag \\
    &\quad + \lambda_b \frac{1}{N_b} \sum_{i=1}^{N_b}(W_N(y_i;\theta)-g(y_i))^2,  \notag  \\
    &\quad + \lambda_d \frac{1}{N_d} \sum_{i=1}^{N_d}(W_N(z_i;\theta)-\hat W(z_i))^2,   
    \label{eq:lossV}
\end{align}
where $\lambda_b>0$ and $\lambda_d>0$ are weight parameters. Training of $\theta$ can be achieved with standard gradient descent. % approaches. 

% \begin{remark}\label{rem:convex}
%     Both the Lyapunov equation (\ref{eq:lyap}) and Zubov equation (\ref{eq:zubov}) with $\psi(s)=\alpha$ for some constant $\alpha>0$ can be solved using linear least-squares optimization, provided that we choose a one-layer network and only train the output layer while letting the weights and bias of the input layer to be randomized. This is known as an extreme learning machine in the machine learning literature \cite{huang2006extreme}. We shall not pursue this in the current paper, but it could be an interesting area for future research. 
% \end{remark}

\begin{remark}
    In formulating the PDE (\ref{eq:1st_order_PDE}), Lyapunov equation has a single boundary condition $V(0)=0$, whereas the boundary condition for Zubov equation consists of both $W(0)=0$ and $W(y)=1$ for $y\not\in\D$, if $\D\subset\Omega$. For examples where $\D$ is not a  subset of $\Omega$, the boundary condition may also consist of $W(y)=\beta(V(y))$ for $y\in\D\cap\partial\Omega$; see comment after (\ref{eq:zubov_boundary}). In Section \ref{sec:examples}, the Van der Pol equation (Example \ref{ex:vdp}) belongs to the former case, and the two-machine power system (Example \ref{ex:power}) belongs to the latter case.
\end{remark}

\begin{remark}
A key feature of the proposed approach, compared to state-of-the-art sum-of-squares methods for estimating regions of attraction, is that \(\text{Loss}(\theta)\) is a non-convex function of \(\theta\). While solving non-convex optimization problems is more challenging\textemdash since obtaining a global minimum is not guaranteed and convergence rates may be slower\textemdash embracing non-convex optimization allows us to utilize the representational power of deeper neural networks. This is especially advantageous when computing regions closer to the true domain of attraction, as demonstrated by the numerical examples in Section \ref{sec:examples}. 
\end{remark}

\section{Formal Verification}\label{sec:verify}

Neural network functions, as computed by solving the Lyapunov equation (\ref{eq:lyap}) and the Zubov equation (\ref{eq:zubov}) using algorithms outlined in Section \ref{sec:algo}, offer no formal stability guarantees. This is because neural approximations come with numerical errors, and training deep neural networks usually does not guarantee convergence to a global minimum. On the other hand, Lyapunov functions are sought precisely because they provide formal guarantees of stability, especially in safety-critical applications. For this reason, formal verification of neural Lyapunov functions is indispensable if formal stability guarantees and rigorous regions of attraction are required. We outline such verification procedures in this section.

\subsection{Verification of Local Stability}
\label{sec:local}

In this section, we assume that Assumption \ref{as:f} holds, i.e., $f$ is continuously differentiable and $x=0$ is an exponentially stable equilibrium point of (\ref{eq:sys}). Rewrite (\ref{eq:sys}) as 
\begin{equation}
    \label{eq:nonlinear}
    \dot x = Ax + g(x),
\end{equation}
where $g(x)=f(x)-Ax$ satisfies $\lim_{x\ra 0}\frac{\norm{g(x)}}{\norm{x}}=0$. Given any symmetric positive definite matrix $Q\in\Real^{n\times n}$, let $P$ be the unique solution to the Lyapunov equation 
\begin{equation}
    \label{eq:lyap_linear}
    PA +A^T P = -Q.
\end{equation}
Let $V_P(x)=x^TPx$. Then 
\begin{align*}
\dot{V}_P(x) 
%& = x^T(PA+A^TP)x + 2x^TPg(x)\\
&= - x^TQx + 2x^TPg(x) \\
& \le - \lambda_{\min}(Q)\norm{x}^2 + 2x^TPg(x) \\
& = -\eps \norm{x}^2  + (2x^TPg(x) - r\norm{x}^2), 
\end{align*} 
where we set $r=\lambda_{\min}(Q)-\eps>0$ for some sufficiently small $\eps>0$ and $\lambda_{\min}(Q)$ is the minimum eigenvalue of $Q$. If we can determine a constant $c>0$ such that 
\begin{equation}\label{eq:c1_P}
V_P(x)\le c \Longrightarrow 2x^TPg(x)\le r\norm{x}^2, 
\end{equation}
then we have verified 
\begin{equation}\label{eq:stability}
V_P(x)\le c \Longrightarrow \dot{V}_P(x)\le -\eps\norm{x}^2,
\end{equation}
which verifies exponential stability of the origin, and a region of attraction $\set{x\in\Real^n:\,V_P(x)\le c}$. 

%the sublevel set $\set{x\in\Real^n:\,V_P(x)\le c}$ is a verified region of attraction of the origin. 

While all this aligns with standard quadratic Lyapunov analysis, we note a subtle issue in the verification of (\ref{eq:c1_P}) using SMT solvers. For example, although Z3 \cite{de2008z3} offers exact verification, unlike dReal's numerical verification \cite{gao2013dreal}, it currently lacks support for functions beyond polynomials. For non-polynomial vector fields, we found dReal to be the state-of-the-art verifier. However, due to the conservative use of interval analysis in dReal to account for numerical errors, verification of inequalities such as $2x^TPg(x)\le r\norm{x}^2$ may return a counterexample close to the origin. To overcome this issue, we consider a higher-order approximation of $Pg(x)$. By the mean value theorem, we have
$$
    P g(x) = P g(x) - P g(0) = \int_0^1 P\cdot Dg(tx)dt\cdot x,
$$
where $Dg$ is the Jacobian of $g$ given by $Dg= Df - A$, 
which implies
$$
 2x^TP g(x) \le 2\sup_{0\le t\le 1}\norm{P \cdot Dg(tx)}\norm{x}^2.
$$
As a result, to verify (\ref{eq:c1_P}), we just need to verify 
\begin{align}
V_P(x)\le c &\Longrightarrow 2\sup_{0\le t\le 1}\norm{P\cdot Dg(tx)}\le r.\label{eq:cr2}
\end{align}
By convexity of the set $\set{x\in\Real^n:\,V_P(x)\le c}$, (\ref{eq:cr2}) is equivalent to 
\begin{align}
V_P(x)\le c &\Longrightarrow 2\norm{P\cdot Dg(x)}\le r.\label{eq:cr3}
\end{align}
Since $Dg(0)=0$ and $Dg$ is continuous, one can always choose $c>0$ sufficiently small such that (\ref{eq:cr3}) can be verified. Furthermore, in rare situations, if $2\norm{P\cdot Dg(x)}\le r$ can be verified for all $x\in \Real^n$, then global exponential stability of the origin is proved for (\ref{eq:sys}).  

\begin{remark}
From (\ref{eq:cr3}), one can further use easily computable norms, such as the Frobenius norm, to over-approximate the matrix 2-norm $\norm{P\cdot Dg(x)}$.
\end{remark}

\begin{example}
    Consider an inverted pendulum 
    \begin{equation}
\begin{aligned}
        \dot{x}_1 & = x_2, \\
        \dot{x}_2 & = \sin(x_1) - x_2 - (k_1 x_1 + k_2 x_2),
\end{aligned}\label{eq:ip}
    \end{equation}
where the linear gains are given by %$[k_1, k_2] = [4.4142, 2.3163]$. 
$k_1=4.4142, k_2= 2.3163$. 
In the literature, there have been numerous references to this example as a benchmark for comparing techniques for stabilization with provable ROA estimates. Interestingly, with $Q=I$ and $\epsilon=10^{-4}$, which leads to $r=0.9999$, it turns out that one can verify $2\|P\cdot Dg(x)\|\leq r$ with dReal \cite{gao2013dreal} within 1 millisecond, which confirms global stability of the origin for (\ref{eq:ip}). 
\end{example}

\begin{remark}
    While stability analysis using quadratic Lyapunov functions is certainly standard, we highlight that our approach to verifying (\ref{eq:c1_P}) via the mean value theorem and (\ref{eq:cr3}) successfully overcomes the limitations in current approaches that exclude verification around a small neighbourhood of the origin to avoid the numerical conservativeness of SMT solvers \cite{chang2019neural,zhou2022neural}. 
\end{remark}

\subsection{Verification of Regions of Attraction}\label{sec:roa}

We outline how local stability analysis via linearization and reachability analysis via a Lyapunov function can be combined to provide a region-of-attraction estimate that is close to the domain of attraction.  Let $V_P$ be a quadratic Lyapunov function as defined in Section \ref{sec:local} and $W_N$ be a neural Lyapunov function learned using algorithms outlined in Section \ref{sec:algo}. Suppose that we have verified a local region of attraction $\set{x\in \Real^n:\,V_P(x)\le c}$ for some $c>0$.  Let $X\subset\Real^n$ denote a compact set on which verification takes place. We can verify the following inequalities using SMT solvers: 
\begin{align}
(c_1\le {W}_N(x) \le c_2) \wedge (x\in X) &\Longrightarrow \dot{W}_N(x)  \le -\eps,\label{eq:dW}\\
({W}_N(x)\le c_1) \wedge (x\in X)  &\Longrightarrow V_P(x)\le c,\label{eq:WP}
\end{align}
where $\eps>0$, $c_2>c_1>0$. 

\begin{proposition}
    If (\ref{eq:dW}) and (\ref{eq:WP}) hold and the set $\W_{c_2}=\set{x\in X:\, W_N(x)\le c_2}$ does not intersect with the boundary of $X$, then $\W_{c_2}$ is a region of attraction for (\ref{eq:sys}). 
\end{proposition}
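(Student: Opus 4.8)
The plan is to establish the two defining properties of a region of attraction in the sense of Section~\ref{sec:prel}: that $\W_{c_2}$ is forward invariant for (\ref{eq:sys}), and that $\W_{c_2}\subseteq\D$. Throughout I will use that $X$ compact makes $\W_{c_2}=\set{x\in X:\,W_N(x)\le c_2}$ a compact set, that $W_N$ is continuous (in fact $C^1$, or at least locally Lipschitz so that $\dot W_N$ is meaningful as an upper-right Dini derivative along trajectories), and that the hypothesis $\W_{c_2}\cap\partial X=\emptyset$ forces $\W_{c_2}\subseteq\mathrm{int}(X)$.

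\emph{Forward invariance.} Fix $x_0\in\W_{c_2}$ and set $T^*=\inf\set{t\ge 0:\,\phi(t,x_0)\notin\W_{c_2}}$, with $\inf\emptyset=\infty$. Suppose $T^*<\infty$. On $[0,T^*)$ the trajectory lies in the compact set $\W_{c_2}$, so it does not escape before $T^*$, and by closedness of $\W_{c_2}$ we get $\phi(T^*,x_0)\in\W_{c_2}\subseteq\mathrm{int}(X)$; hence $\phi(t,x_0)$ stays in $X$ on a neighborhood of $T^*$, so the only way it can leave $\W_{c_2}$ just after $T^*$ is through the level set $\set{W_N=c_2}$, which forces $W_N(\phi(T^*,x_0))=c_2\ge c_1$. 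Then (\ref{eq:dW}) gives $\dot W_N(\phi(T^*,x_0))\le-\eps<0$, so $t\mapsto W_N(\phi(t,x_0))$ is strictly decreasing at $T^*$; combined with $\phi(t,x_0)\in\mathrm{int}(X)$ near $T^*$, this yields $\phi(t,x_0)\in\W_{c_2}$ on some interval $[T^*,T^*+\eta)$, contradicting the definition of $T^*$. Hence $T^*=\infty$, i.e.\ $\phi(t,x_0)$ remains in the compact set $\W_{c_2}$ on its maximal interval of existence, which therefore equals $[0,\infty)$; thus $\W_{c_2}$ is forward invariant.

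\emph{Containment in $\D$.} Fix $x_0\in\W_{c_2}$, so $\phi(t,x_0)\in\W_{c_2}$ for all $t\ge 0$ by the previous step. Let $\tau=\inf\set{t\ge 0:\,W_N(\phi(t,x_0))\le c_1}$. If $\tau=\infty$, then $c_1<W_N(\phi(t,x_0))\le c_2$ and $\phi(t,x_0)\in X$ for all $t\ge 0$, so (\ref{eq:dW}) gives $\dot W_N(\phi(t,x_0))\le-\eps$ for all $t\ge 0$; integrating, $W_N(\phi(t,x_0))\le W_N(x_0)-\eps t\to-\infty$, contradicting that $W_N$ is bounded below on the compact set $\W_{c_2}$. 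Hence $\tau<\infty$, and by continuity $W_N(\phi(\tau,x_0))\le c_1$ with $\phi(\tau,x_0)\in X$, so (\ref{eq:WP}) yields $V_P(\phi(\tau,x_0))\le c$; that is, $\phi(\tau,x_0)$ lies in the verified local region of attraction $\set{x:\,V_P(x)\le c}\subseteq\D$. By the flow property, $\phi(t,x_0)=\phi(t-\tau,\phi(\tau,x_0))\to 0$ as $t\to\infty$, so $x_0\in\D$. Combining the two steps, $\W_{c_2}$ is a forward invariant subset of $\D$, i.e.\ a region of attraction.

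The only delicate point is the interplay with $\partial X$ in the forward-invariance step: the verified implications (\ref{eq:dW})--(\ref{eq:WP}) hold only on $X$, so one must exclude the possibility that a trajectory escapes $\W_{c_2}$ by exiting $X$ rather than by crossing the level set $\set{W_N=c_2}$, and this is precisely where the hypothesis $\W_{c_2}\cap\partial X=\emptyset$ enters, turning the level-set barrier argument into a valid one. If $W_N$ is merely Lipschitz (e.g.\ a ReLU network), one interprets $\dot W_N$ as the upper Dini derivative and invokes the standard fact that a continuous function whose Dini derivative is $\le-\eps$ along the trajectory is strictly decreasing; the structure of the argument is unchanged.
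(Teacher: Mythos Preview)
Your proof is correct and follows the same approach as the paper: forward invariance of $\W_{c_2}$ via the level-set barrier at $\set{W_N=c_2}$ using (\ref{eq:dW}) and the assumption $\W_{c_2}\cap\partial X=\emptyset$, followed by finite-time entry into $\set{V_P\le c}$ via (\ref{eq:dW})--(\ref{eq:WP}). The paper's proof is a two-sentence sketch of exactly this argument; you have simply filled in the details (first-exit-time contradiction, the integration $W_N\to-\infty$ to force $\tau<\infty$, and the remark on Dini derivatives).
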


\begin{proof}
A solution starting from $\W_{c_2}$ remains in $\W_{c_2}$ as long as the solution do not leave $X$. However, to leave $X$ it has to cross the boundary of $\W_{c_2}$ first. This is impossible because of (\ref{eq:dW}). Within $\W_{c_2}$, solutions converge to $\set{x\in\Real^n:\,x^TPx\le c}$ in finite time, which is a verified region of attraction.  
\end{proof}
If $W_N$ is obtained by solving the Zubov equation (\ref{eq:zubov}) using algorithms proposed in Section \ref{sec:algo}, then the set $\W_{c_2}$ can provide a region of attraction close to the true domain of attraction. We shall demonstrate this in the next section using numerical examples, where it is shown that the neural Zubov approach outperforms sum-of-squares Lyapunov functions in this regard.

\section{Numerical Examples}\label{sec:examples}

In this section, we present numerical examples demonstrating our ability to effectively compute neural Lyapunov functions by solving Lyapunov and Zubov equations. We show that the obtained neural Lyapunov functions can be formally verified, yielding certified regions of attraction. Thanks to the theoretical guarantees of Zubov's equation and the expressiveness of neural networks, these regions of attraction can outperform those derived from sum-of-squares Lyapunov functions obtained using semidefinite programming.  

\textbf{Implementation details}: Numerical experiments were conducted on a 2020 MacBook Pro with a 2 GHz Quad-Core Intel Core i5, without any GPU. For training, we randomly selected 300,000 collocation points in the domain and trained for 20 epochs using the Adam optimizer with PyTorch. The sizes of the neural networks are as reported in Tables \ref{tab:vdp} and \ref{tab:power}. We used the hyperbolic tangent as the activation function. Verification was done with the SMT solver dReal \cite{gao2013dreal}. All examples are solved using the tool LyZNet \cite{liu2024lyznet}. The code is available at \url{https://git.uwaterloo.ca/hybrid-systems-lab/lyznet}. Since formal verification is conducted after training, we choose neural network sizes that can be effectively verified by the SMT solver dReal \cite{gao2013dreal}. Our numerical studies indicate that a network with two hidden layers of up to 30 neurons each, or three hidden layers of up to 10 neurons, provides a good balance between expressiveness and effective verifiability. The numerical examples below, unless otherwise noted, all use neural networks with two hidden layers of 30 neurons each. Table \ref{tab:time} summarizes the computational times for training and verification. Note that computational time largely depends on computer hardware. The computational times provided in Table \ref{tab:time} serve as a reference point.

\begin{table}[h!]
  \caption{Training and verification times for numerical examples}
  \label{tab:time}
  \begin{tabular}{ccc}
    \toprule
Model &  Train (sec) & Verify (sec) \\ 
    \midrule
Van der Pol ($\mu=1$) & 494 & 973\\
Van der Pol ($\mu=3$) & 502 & 442\\
two-machine power & 473 & 3,375  \\
10-d system & 92 & 356 \\
20-d networked Van der Pol &  27,342 & 46,611\\
\bottomrule
\end{tabular}
\end{table}

\begin{example}[Van der Pol equation]\label{ex:vdp}
Consider the reversed Van der Pol equation given by
    \begin{equation}
\begin{aligned}
        \dot{x}_1 & = -x_2, \\
        \dot{x}_2 & = x_1 - \mu (1 - x_1^2) x_2,
\end{aligned}
    \end{equation}
where $\mu > 0$ is a stiffness parameter.
\end{example}

For $\mu = 1.0$ and $X = [-2.5, 2.5] \times [-3.5, 3.5]$, we train a neural network to solve Zubov's PDE (\ref{eq:zubov}) on $X$. Figure \ref{fig:vdp1} depicts the largest verifiable sublevel set, along with the learned neural Lyapunov function. It can be seen that the verified ROA is quite comparable and slightly better than that provided by an SOS Lyapunov function with a polynomial degree of six, obtained using a standard ``interior expanding" algorithm \cite{packard2010help}. As the stiffness of the equation increases, we observe further improved advantages of neural Lyapunov approach over SOS Lyapunov functions. With $\mu=3.0$ and domain $X=[-3.0, 3.0]\times [-6.0, 6.0]$, the comparison is shown in Figure \ref{fig:vdp3}.  

\begin{figure}[h]
  \centering
  \includegraphics[width=\linewidth]{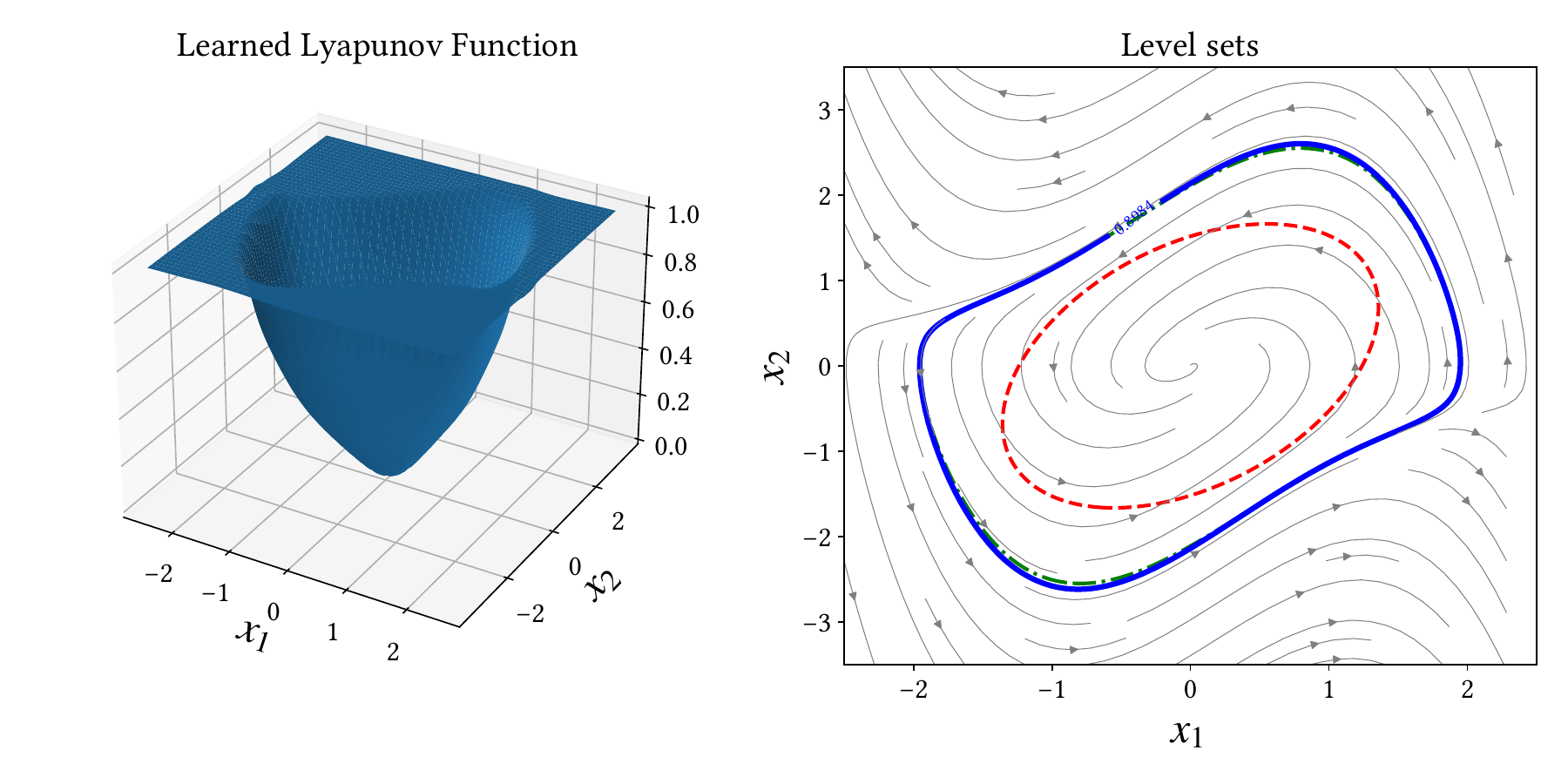}
  \caption{Verified neural Lyapunov function for Van der Pol equation with $\mu=1.0$ (Example \ref{ex:vdp}). Dashed red: quadratic Lyapunov function; dot-dashed green: SOS Lyapunov; solid blue: neural Lyapunov.}
  \label{fig:vdp1}
\end{figure}

\begin{figure}[h]
  \centering
  \includegraphics[width=\linewidth]{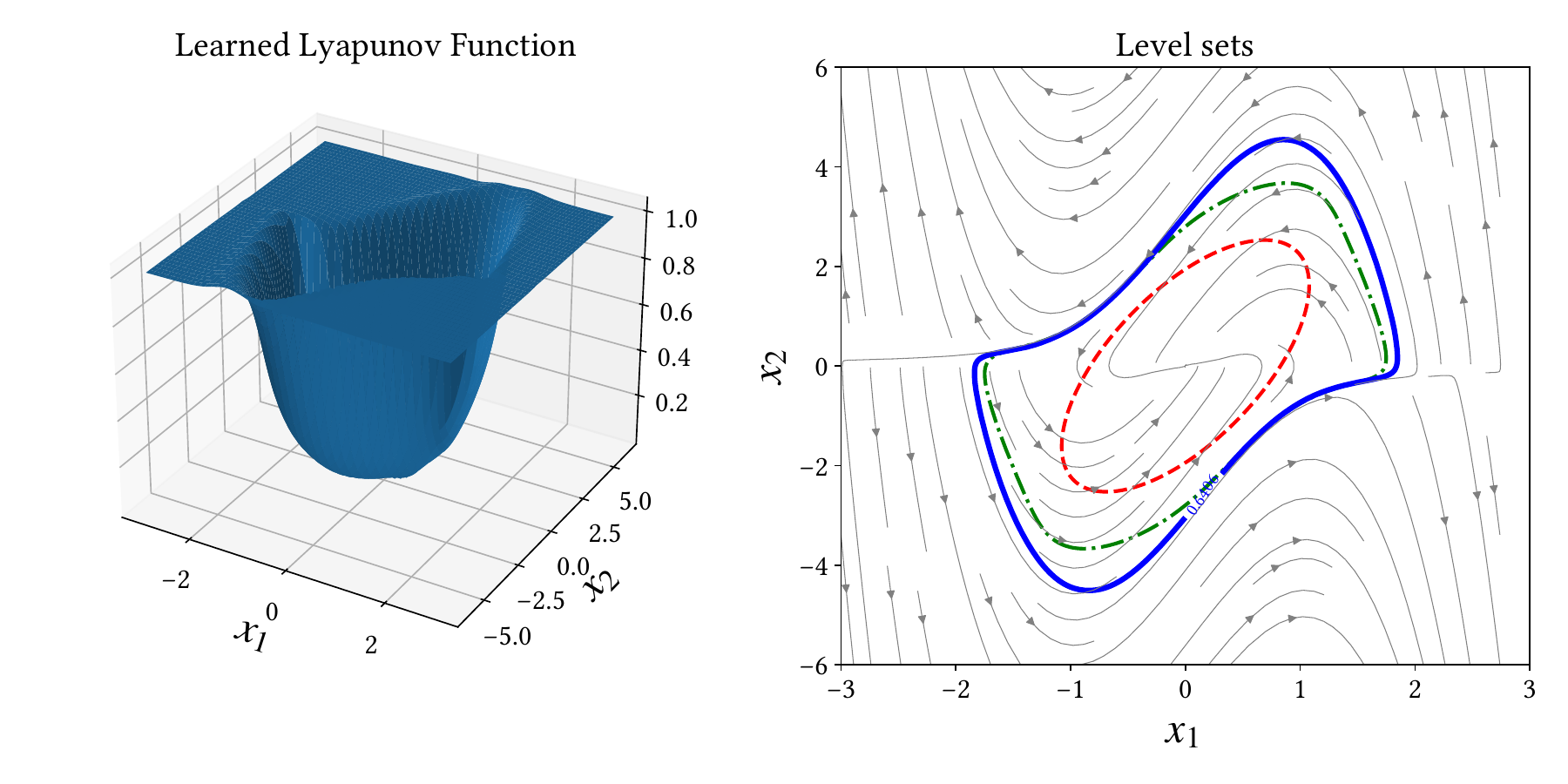}
  \caption{Verified neural Lyapunov function for Van der Pol equation with $\mu=3.0$ (Example \ref{ex:vdp}). Dashed red: quadratic Lyapunov function; dot-dashed green: SOS Lyapunov; solid blue: neural Lyapunov. The learned neural Lyapunov function outperforms a sixth degree SOS Lyapunov function.}
  \label{fig:vdp3}
\end{figure}

\begin{table}
  \caption{Parameters and verification results for Van der Pol equation (Example \ref{ex:vdp})}
  \label{tab:vdp}
  \begin{tabular}{cccccc}
    \toprule
 $\mu$  & Layer & Width & $c_2$ & Volume 
 & SOS volume \\ 
    \midrule
    1.0 & 2 & 30 & 0.898 & \textbf{95.64\%} &  94.17\% \\
    \midrule
    3.0 & 2 & 30 & 0.640  & \textbf{85.12\%} &  70.88\%\\
    \bottomrule
  \end{tabular}
\end{table}

\begin{example}[Two-machine power system]\label{ex:power}Consider the two-machine power system \cite{vannelli1985maximal} modelled by 
    \begin{equation}
\begin{aligned}
        \dot{x}_1 & = x_2, \\
        \dot{x}_2 & = -0.5x_2 - (\sin(x_1 +\delta)-\sin(\delta)),
\end{aligned}
    \end{equation}
    where $\delta = \frac{\pi}{3}$. Note that the system has an unstable equilibrium point at $(\pi/3,0)$.   
\end{example}

    Figure \ref{fig:power} shows that a neural network with two hidden layers and 30 neurons in each layer provides a region-of-attraction estimate significantly better than that from a sixth-degree polynomial SOS Lyapunov function, computed with a Taylor expansion of the system model. The example shows that neural Lyapunov functions perform better than SOS Lyapunov functions when the nonlinearity is non-polynomial. We also compared with the rational Lyapunov function presented in \cite{vannelli1985maximal}, but the ROA estimate is worse than that from the SOS Lyapunov function, and we were not able to formally verify the sublevel set of the rational Lyapunov function reported in \cite{vannelli1985maximal} with dReal \cite{gao2013dreal}. Improving the degree of polynomial in the SOS approach does not seem to improve the result either. 

\begin{figure}[h]
  \centering
  \includegraphics[width=\linewidth]{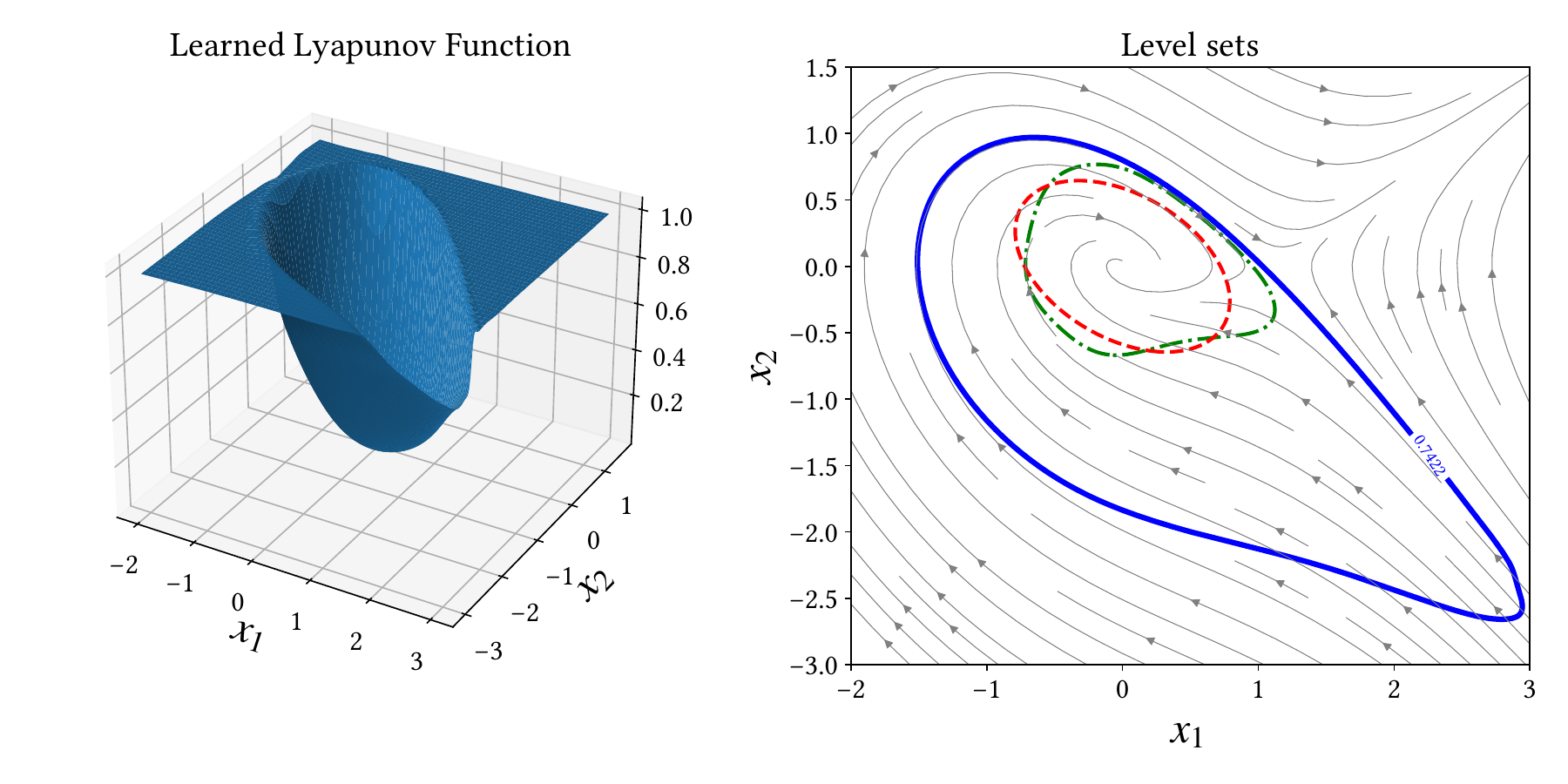}
  \caption{Verified neural Lyapunov function for a two-machine power system (Example \ref{ex:power}). Dashed red: quadratic Lyapunov function; dot-dashed green: SOS Lyapunov; solid blue: neural Lyapunov. The learned neural Lyapunov function significantly outperforms a sixth degree SOS Lyapunov function.} 
  \label{fig:power}
\end{figure}

\begin{table}
  \caption{Parameters and verification results for a two-machine power system (Example \ref{ex:power})}
  \label{tab:power}
  \begin{tabular}{ccccc}
    \toprule
  Layer & Width & $c_2$ & Volume  
 & SOS volume\\ 
    \midrule
 2 & 30 & 0.742 & \textbf{82.52\%} &  18.53\% \\
    \bottomrule
  \end{tabular}
\end{table}

\begin{example}[10-dimensional system] \label{ex:10d}
    Consider the 10-dimensional nonlinear system from \cite{grune2021computing}:
\begin{align*}
 \dot x_1 =    &-x_1 + 0.5 x_2 - 0.1 x_9^2, \\
 \dot x_2 =   &-0.5 x_1 - x_2, \\
 \dot x_3 =    &-x_3 + 0.5 x_4 - 0.1 x_1^2, \\
 \dot x_4 =    &-0.5 x_3 - x_4, \\
 \dot x_5 =  &-x_5 + 0.5 x_6 + 0.1 x_7^2, \\
 \dot x_6 =   &-0.5 x_5 - x_6, \\
 \dot x_7 =   &-x_7 + 0.5 x_8, \\
 \dot x_8 =   &-0.5 x_7 - x_8, \\
 \dot x_9 =   &-x_9 + 0.5 x_{10}, \\
 \dot x_{10} =    &-0.5 x_9 - x_{10} + 0.1 x_2^2.
\end{align*}
\end{example}
The example was used by the authors of \cite{grune2021computing} and \cite{gaby2022lyapunov} to illustrate the training of neural network Lyapunov functions for stability analysis, where the trained Lyapunov functions were not verified. We remark that global asymptotic stability of this system can be easily established using an input-to-state stability argument. Therefore, when training neural Lyapunov functions for this example, one is essentially computing a local Lyapunov function within the domain of attraction. 

Using the verification method from Section \ref{sec:verify}, we confirmed that a quadratic Lyapunov function certifies an optimal ellipsoidal region of attraction. Therefore, this example offers limited insight for ROA comparison or showcasing the capabilities of neural Lyapunov functions. We replicated the training of a local neural Lyapunov function as described in \cite{gaby2022lyapunov,grune2021computing}, employing a differential inequality loss and a simple one-layer network. The training concluded within two epochs, achieving a maximum loss below $10^{-6}$ and an average loss under $10^{-7}$, as depicted in Figure \ref{fig:10d}. Due to the system's high dimensionality, we restricted the neural network's complexity to enable efficient verification. The verified ROA is smaller than that achieved with the quadratic Lyapunov function, which is optimal for this case. Training a local Lyapunov function seems straightforward, but capturing the full domain of attraction presents challenges, especially in high-dimensional systems. In the next example, we will demonstrate how these techniques can be applied to such systems.

\begin{figure}[h]
  \centering
  \includegraphics[width=\linewidth]{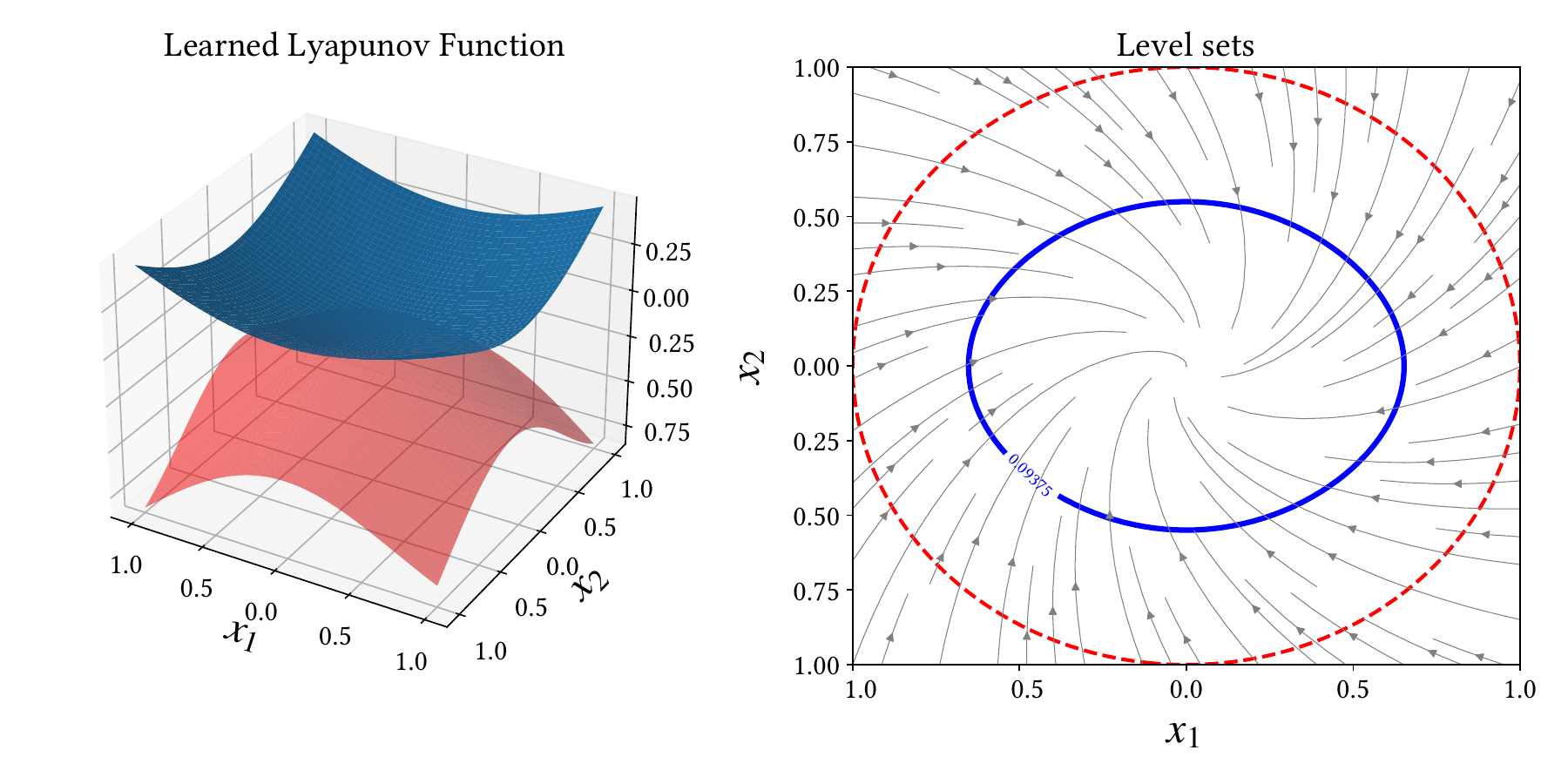}
  \caption{Verified neural Lyapunov function for a 10-dimensional system (Example \ref{ex:10d}). Dashed red: quadratic Lyapunov function; solid blue: neural Lyapunov trained using differential inequality loss as in \cite{grune2021computing,gaby2022lyapunov}. 
  %While the learned neural Lyapunov function can certify stability, the ROA is worse than that provided by the quadratic function using dReal. 
  The red surface depicts the derivative of the learned Lyapunov function along the vector field the system. Plots are projected to the $(x_1,x_2)$-plane.} 
  \label{fig:10d}
\end{figure}

\begin{example}[Networked Van der Pol oscillators]
Inspired by \cite{kundu2015sum}, we consider a network of reversed Van der Pol equations of the form
\begin{align*}
\dot x_{i1} &= -x_{i2}, \\
\dot x_{i2} &= x_{i1} - \mu_i (1 - x_{i1}^2)x_{i2} + \sum_{j\neq i} \mu_{ij} x_{i1}x_{j2},
\end{align*}
where $\mu_i$ is a parameter ranging from $(0.5, 2.5)$ and $\mu_{ij}$ represents the interconnection strength. We choose the number of total subsystems $l=10$. The network topology is depicted in Fig. \ref{fig:vdp_network_topology}. The parameters $\mu_i$ are randomly generated and take the following values for $i=1,\ldots,10$:
$$
 [1.25, 2.4, 1.96, 1.7, 0.81, 0.81, 0.62, 2.23, 1.7, 1.92]. 
$$
We set that $\mu_{ij}\in (-0.1,0.1)$ and the number of nonzero entries in $\set{\mu_{ij}}$ for each $i$ is fewer than $3$. 
\end{example}

The total dimension of the interconnected system is therefore 20, which is beyond the capability of current SMT or SDP-based synthesis of Lyapunov functions if a monolithic approach is taken. We trained neural networks of three hidden layers with 10 neurons each for the individual subsystems using the approach proposed in Section \ref{sec:algo}. We then compositionally verified regions of attraction using the approach detailed in Section \ref{sec:verify} (see also \cite{liu2024compositionally}), leveraging the compositional structure of the networked system. The verified regions of attraction for subsystems 1 and 2 are shown in Fig. \ref{fig:vdp_network_level_sets}. It is clear that the neural network Lyapunov functions outperform SOS Lyapunov functions. 

\begin{figure}[ht]
    \centering
    \begin{tikzpicture}[scale=0.9, transform shape]
        % Define the nodes
        \foreach \i in {1,...,10} {
            \node[circle, fill=blue!50] (N\i) at (360/10*\i:2) {\i};
        }
        
        % Draw the arrows based on the first matrix
\draw[->] (N9) -- (N1);
\draw[->] (N6) -- (N2);
\draw[->] (N9) -- (N2);
\draw[->] (N2) -- (N3);
\draw[->] (N10) -- (N3);
\draw[->] (N3) -- (N4);
\draw[->] (N9) -- (N4);
\draw[->] (N7) -- (N5);
\draw[->] (N10) -- (N5);
\draw[->] (N1) -- (N6);
\draw[->] (N8) -- (N6);
\draw[->] (N3) -- (N7);
\draw[->] (N5) -- (N7);
\draw[->] (N3) -- (N8);
\draw[->] (N7) -- (N8);
\draw[->] (N2) -- (N9);
\draw[->] (N6) -- (N9);
\draw[->] (N4) -- (N10);
\draw[->] (N8) -- (N10);
    \end{tikzpicture}
    \caption{The network topology for the Van der Pol network.}
    \label{fig:vdp_network_topology}
\end{figure}
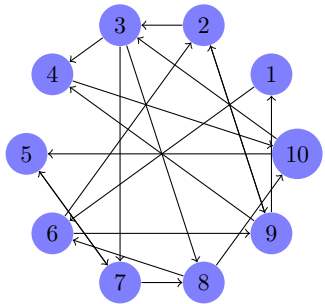

\begin{figure}[!htbp]
    \centering
    \includegraphics[width=0.47\textwidth]{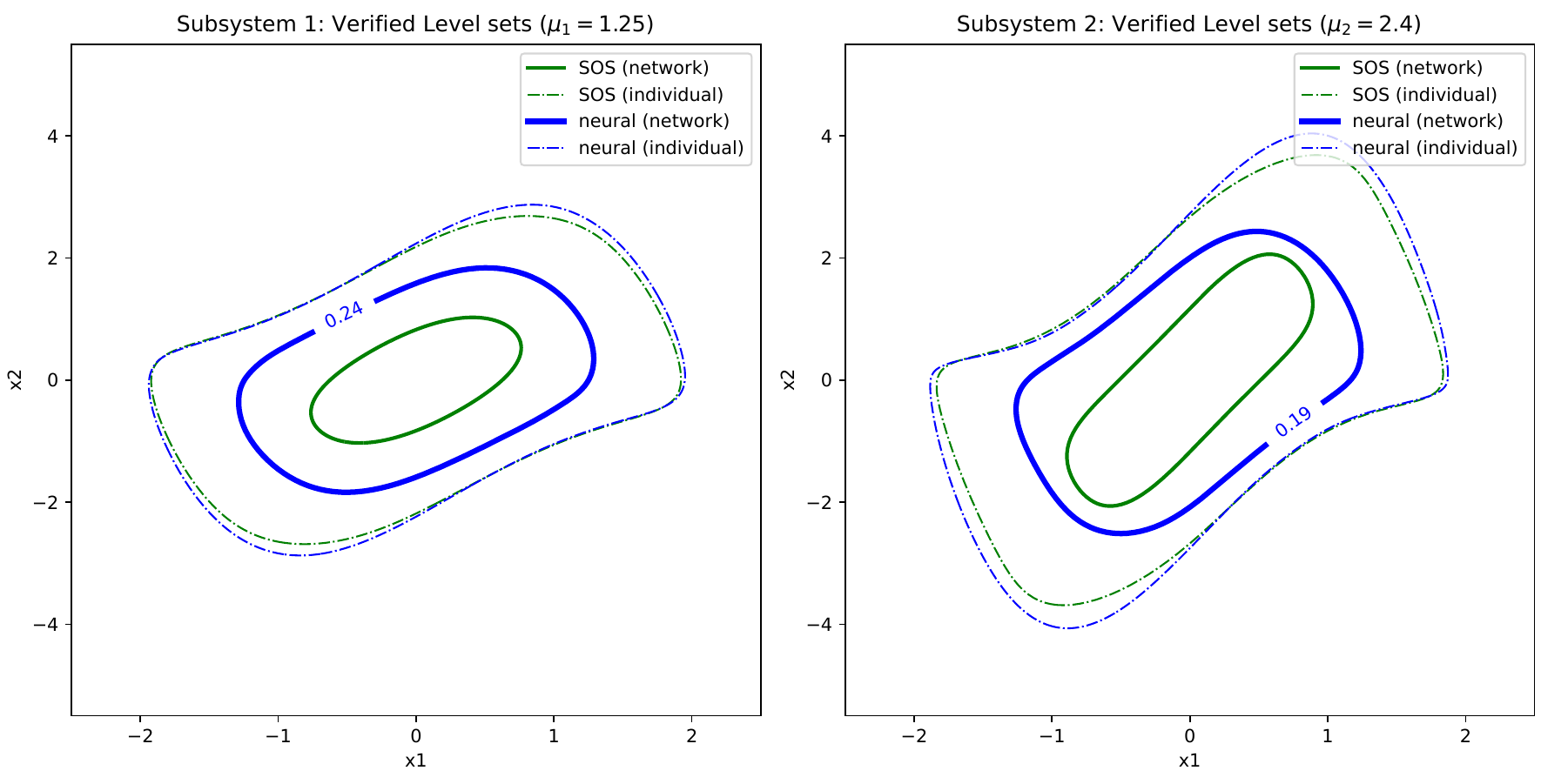}
    \caption{Verified regions of attraction for networked Van der Pol oscillators. We show the results for subsystems 1 and 2 of a 20-dimensional interconnected system. The thick solid lines represent the regions of attraction for the interconnected system, while the thin dashed lines indicate those for individual subsystems. }    \label{fig:vdp_network_level_sets}
\end{figure}

\begin{remark}
Our comparison focuses only on SOS Lyapunov functions because existing approaches using neural Lyapunov functions do not provide approximations of the entire domain of attraction. This limitation exists as training, using the Lyapunov equation (or inequality), must be conducted on a chosen subset of the domain of attraction \cite{chang2019neural,gaby2022lyapunov,grune2021computing} and is inherently local. To the best knowledge of the authors, our work is the first to offer verified regions of attraction that closely approximate the true domain of attraction using neural Lyapunov functions. This is achieved by encoding the Zubov equation (\ref{eq:zubov}) with a PINN approach and allowing training to take place in a region containing the domain of attraction.
\end{remark}

\section{Conclusions} \label{sec:conclude}

We presented a framework for learning neural Lyapunov functions using physics-informed neural networks and verifying them with satisfiability modulo theories solvers. By solving Zubov's PDE with neural networks, we demonstrated that the verified ROA can approach the boundary of the domain of attraction, surpassing sum-of-squares Lyapunov functions obtained through semidefinite programming, as shown by numerical examples. This is achieved by embracing non-convex optimization and leveraging machine learning infrastructure, as well as formal verification tools, to excel where convex optimization may not yield satisfactory results. 
% It would be interesting to investigate how convex optimization can be integrated in the proposed framework for suitable problems (see Remark \ref{rem:convex}). 

There are numerous ways the framework can be expanded. In the numerical example, we have already demonstrated the potential of using compositional verification to cope with learning and verifying Lyapunov functions for high-dimensional systems. This is ongoing research, and some preliminary results have been reported in \cite{liu2024compositionally}. Future work could also include tool development \cite{liu2024lyznet} with support for different verification engines, such as dReal \cite{gao2013dreal} and Z3 \cite{de2008z3}, and leveraging the growing literature on neural network verification tools. Another natural next step is to handle systems with inputs, including perturbations and controls. Initial work on using physics-informed neural network Lyapunov functions for optimal control has been reported in \cite{meng2024physics}. Investigating robust and control neural Lyapunov functions from the Zubov equation can be an interesting approach \cite{camilli2001generalization}. It would also be interesting to investigate Zubov equation with state constraints to training neural Lyapunov-barrier functions \cite{meng2022smooth} to cope with stability with safety requirements. Simultaneous training of controllers and Lyapunov/value functions are also promising directions, as well as data-driven approaches \cite{meng2023learning}. 

While we have demonstrated that the approach can scale to high-dimensional systems by leveraging the compositional structure, SMT verification of neural networks remains a bottleneck. Further research on efficient tools for formally verifying neural networks can certainly help maximize the full potential of the proposed approach.

\begin{ack}                               % Place acknowledgements
This work was partially supported by the NSERC of Canada and the CRC program. The authors thank the anonymous reviewers for their constructive feedback that helped improve the presentation of the paper. 
\end{ack}

\bibliographystyle{plain}        % Include this if you use bibtex 
\bibliography{automatica} 

\appendix

\section{Proof of Proposition \ref{prop:V}}\label{sec:proof_prop_V}

\begin{proof}
\ifarXiversion
(1) Suppose $x\in \D$. Let $\rho>0$ be from Assumption \ref{as:omega}(ii). There exists some $T_x>0$ such that $\abs{\phi(t,x)}<\rho$ for all $t\ge T_x$. It follows that
\begin{align*}
V(x) & = \int_0^{T_x} \omega(\phi(t,x))dt + \int_{T_x}^\infty  \omega(\phi(t,x))dt    \\
& = \int_0^{T_x} \omega(\phi(t,x))dt + \int_{0}^\infty \omega(\phi(t,\phi(T_x,x)))dt\\
& = \int_0^{T_x} \omega(\phi(t,x))dt + V(\phi(T_x,x)) < \infty,
\end{align*}
where we used finiteness of the first integral and Assumption \ref{as:omega} to conclude $V(\phi(T_x,x))<\infty$. Now suppose that $x\not \in \D$. Then $\phi(t,x)\not\in \D$ for any $t\ge 0$. Since $\D$ is open, there exists some $\delta>0$ such that  $\abs{\phi(t,x)}>\delta$ for all $t\ge 0$. By Assumption \ref{as:omega}, $V(x)=\infty$. 

(2) Let $\set{x_n}$ be a sequence such that  $x_n\ra y$ for some $y\in \partial \D$. Choose $\delta>0$ such that $\bar{\mathcal{B}}_\delta:=\set{x\in\Real^n:\,\abs{x}\le \delta}\subset \D$. Let $T_n$ be the first time that $\abs{\phi(t,x_n)}\le \delta$. Clearly, if $x_n\not\in \D$, then $T_n=\infty$. For all $t\in [0,T_n)$, we  have $\abs{\phi(t,x_n)} > \delta$ and $\omega(\phi(t,x_n))>c$ for some $c>0$ because of Assumption \ref{as:omega}(i). Hence 
$$
V(x_n) = \int_0^\infty \omega(\phi(t,x_n)) dt \ge \int_0^{T_n} \omega(\phi(t,x_n)) dt \ge cT_n.
$$
We can conclude $V(x_n)\ra \infty$ if $T_n\ra\infty$. Suppose that this is not the case. Then $\set{T_n}$ contains a bounded subsequence, still denoted by $\set{T_n}$, that converges to $T$. It follows by continuity that $\phi(T_n,x_n)\ra \phi(T,y)$ and $\abs{\phi(T,y)}\leq\delta$. Hence $\phi(T,y)\in \D$. It follows that $y\in \D$. Since $\D$ is open, this is a contradiction. We must have $V(x_n)\ra\infty$ as $n\ra\ \infty$. 

(3) Positive definiteness of $V$ follows from positive definiteness of $\omega$ and by a continuity argument. 

(4) For $x\in\D$, we have 
\begin{align*}
\dot{V}(x) & = \lim_{t\ra 0+} \frac{V(\phi(t,x))-V(x)}{t}\\
& =\lim_{t\ra 0+}\frac{\int_0^\infty \omega(\phi(s,\phi(t,x)))ds - \int_0^\infty \omega(\phi(s,x))ds}{t} \\
& =  \lim_{t\ra 0+}\frac{\int_0^\infty \omega(\phi(s+t,x))ds -\int_0^\infty \omega(\phi(s,x))ds}{t}\\
& = \lim_{t\ra 0+}\frac{\int_{t}^\infty \omega(\phi(s,x))ds - \int_0^\infty \omega(\phi(s,x))ds}{t}\\
&= \lim_{t\ra 0+} \frac{\int_{t}^0 \omega(\phi(s,x))ds }{t} = -\omega(x).
\end{align*}
\else
(1)--(4) were proved in \cite{liu2023towards} except continuity of $V$. See also \cite{liu2023physics}.
\fi
To show continuity of $V$, fix any $x_0\in \D$. By Assumption \ref{as:omega}, for any $\eps>0$, there exists $\delta>0$ such that $\abs{x}<\delta$ implies $V(x)<\eps/2$. By asymptotic stability of the origin, there exists some $T>0$ such that $\abs{\phi(T,x_0)}<\frac{\delta}{2}$. By continuous dependence of solutions to (\ref{eq:sys}) on initial conditions, there exists some $\rho>0$ such that $\B_{\rho}(x_0) \subset \D$ and $\abs{\phi(T,y)}<\delta$ for all $y\in \B_{\rho}(x_0)$. It follows that, for any $y\in \B_{\rho}(x_0)$, 
\begin{align*}
    & \abs{V(y) - V(x_0)} \\
    & = \left\vert\int_0^T \omega(\phi(t,y))dt  - \int_0^T \omega(\phi(t,x_0))dt \right.\\
    & \qquad \left. + \int_T^\infty \omega(\phi(t,y))dt -  \int_T^\infty \omega(\phi(t,x_0))dt \right\vert \\
    & \le \int_0^T \abs{\omega(\phi(t,y))- \omega(\phi(t,x_0))}dt \\
    & \qquad +\abs{V(\phi(T,y)) - V(\phi(T,x_0))}\\
    & \le \int_0^T \abs{\omega(\phi(t,y))- \omega(\phi(t,x_0))}dt + \frac{\eps}{2},
\end{align*}
where the last inequality follows from our choice of $\delta$, $T$, $\rho$, and non-negativeness of $V$. By continuous dependence on initial conditions and continuity of $\omega$, there exists $\rho'\in (0,\rho)$ such that $\int_0^T \abs{\omega(\phi(t,y))- \omega(\phi(t,x_0))}dt<\eps/2$ for all $y\in \B_{\rho'}(x_0)$, which implies 
$$
\abs{V(y) - V(x_0)} < \eps,\quad \forall y\in \B_{\rho'}(x_0). 
$$
We proved that $V$ is continuous on $\D$. 
\end{proof}

\section{Proof of Proposition \ref{prop:lyap}}\label{sec:proof_prop_lyap}

We start with the formal definition of viscosity solutions for first-order PDEs \cite{crandall1983viscosity,bardi1997optimal}. Consider a first-order PDE of the form
\begin{equation}
    \label{eq:pde}
    F(x,u(x),Du(x)) = 0,\quad x\in \Omega,
\end{equation}
where $\Omega\subset\Real^n$ is an open set.

\begin{definition}
    A function $u\in C(\Omega)$ is said to be a viscosity subsolution of (\ref{eq:pde}) if for any  $\psi\in C^1(\Omega)$, we have 
    $$
    F(x,u(x),D\psi(x)) \le 0
    $$
    whenever $x$ is a local maximum of $u-\psi$. It is said to be viscosity supersolution of (\ref{eq:pde}) if for any  $\psi\in C^1(\Omega)$, we have 
    $$
    F(x,u(x),D\psi(x)) \ge 0
    $$
    whenever $x$ is a local minimum of $u-\psi$. We say $u$ is a viscosity solution of (\ref{eq:pde}) if it is both a viscosity subsolution and a viscosity supersolution. 
\end{definition}

An equivalent way to define viscosity solutions is given below. Consider any function $v:\,\Omega\ra \Real$ and define the sets
\begin{align*}
& D^+ v(x) \\
& := \set{p\in\Real^n:\, \limsup_{y\ra x} \frac{v(y)-v(x)-p\cdot(y-x)}{\abs{y-x}}\le 0}
\end{align*}
and 
\begin{align*}
& D^- v(x) \\
& := \set{p\in\Real^n:\, \liminf_{y\ra x} \frac{v(y)-v(x)-p\cdot(y-x)}{\abs{y-x}}\ge 0}. 
\end{align*}
They are called the (Frech\'et) superdifferential and the subdifferential of $v$ at $x$. 

\begin{definition}
    A function $u\in C(\Omega)$ is said to be a viscosity subsolution of (\ref{eq:pde}) if 
    $$
    F(x,u(x),p) \le 0,\quad \forall x\in\Omega, \, p\in D^+u(x), 
    $$
    and a viscosity supersolution of (\ref{eq:pde}) if 
    $$
    F(x,u(x),p) \ge 0,\quad \forall x\in\Omega, \, p\in D^-u(x).
    $$
\end{definition}
The equivalence of these conditions is standard and can be found in \cite[Chapter 2]{bardi1997optimal}. Depending on the situation, one of these equivalent definitions may be more convenient to use. At any point $x$ where $u$ is differentiable, we have $D^+u(x)=D^-u(x)=\set{Du(x)}$ and the PDE (\ref{eq:pde}) is satisfied in the classical sense. 

It is clear from the definition of viscosity solutions that the PDE (\ref{eq:pde}) is not equivalent to $-F(x,u,Du)=0$ in the viscosity sense. When interpreting the Lyapunov PDE (\ref{eq:lyap}) in viscosity sense, we define 
\begin{equation}\label{eq:viscosity_pde}
    F_L(x,u,p) = -\omega(x) - p\cdot f(x).
\end{equation}

We now provide a proof of Proposition \ref{prop:lyap}. 

\begin{proof}
    (1) Consider $V$ defined by (\ref{eq:V}). From Proposition \ref{sec:proof_prop_V}, $V$ is continuous on $\Omega$. Let $\psi\in C^1(\Omega)$. Suppose $x$ is a local maximum point of $V-\psi$. It follows that 
    $$
    V(x) - \psi(x) \ge V(z) - \psi(z)
    $$
    for all $z$ in a small neighborhood of $x$, which implies 
    $$
    V(x) - \psi(x) \ge V(\phi(t,x)) - \psi(\phi(t,x)),
    $$
    for all $t$ close to 0. Rearranging this gives
    $$
    V(\phi(t,x)) - V(x) \le \psi(\phi(t,x)) - \psi(x).
    $$
    By Proposition \ref{prop:V} and continuous differentiability of $\psi$, we have
    \begin{align*}
            -\omega(x) & = \lim_{t\ra 0^+} \frac{V(\phi(t,x)) - V(x)}{t}  \le D\psi(x) \cdot f(x),
    \end{align*}
    which verifies $V$ is a viscosity subsolution of (\ref{eq:lyap}) in view of (\ref{eq:viscosity_pde}). Similarly, we can show that $V$ is a viscosity supersolution. Uniqueness follows from a special case of the optimality principle (see Proposition II.5.18, Theorem III.2.33, and Remark III.2.34 in \cite{bardi1997optimal}). 

    (2) To show local Lipschitz continuity of $V$ under the stated assumptions, consider any compact set $K\subset \D$. Then, for any $r>0$, there exists some $T>0$ such that $\abs{\phi(t,x)}<r$ for all $t\ge T$ and all $x\in K$. There also exists another compact set $\hat K$ such that $\phi(t,x)\in \hat{K}$ for all $t\ge 0$. Indeed, $\hat{K}$ can be taken as the union of the reachable set of (\ref{eq:sys}) from $K$ on $[0,T]$ and $\bar{B}_r$. Let $L_f>0$ be a Lipschitz constant of $f$ on $\hat{K}$ and $L_\omega$ be a Lipschitz constant of $\omega$ on $\hat{K}$. For any $x,y\in K$, we have 
    \begin{align}
        &\abs{V(x)-V(y)} \le  \int_0^T \abs{\omega(\phi(t,x))-\omega(\phi(t,y))}dt \notag\\
        & \quad + \int_T^\infty \abs{\omega(\phi(t,x))-\omega(\phi(t,y))}dt \notag\\
        & \le L_\omega \frac{e^{L_fT}-1}{L_f}\abs{x-y} \notag\\
        & \quad + L_\omega\int_0^\infty \abs{\phi(s,\phi(T,x))-\phi(s,\phi(T,y))}dt, \label{eq:lip}
    \end{align}
    where, to obtain the inequality above, we used Gronwall's inequality to estimate $\abs{\phi(t,x)-\phi(t,y)}\le \abs{x-y}e^{L_f t}$ for $t\in [0,T]$ within the first integral.
    
    We now determine a sufficiently small $r>0$ such that the following contraction condition holds for all solutions of (\ref{eq:sys}) starting from $\B_r$: there exists $C>0$ and $\sigma>0$ such that 
    \begin{equation}
        \label{eq:contraction}
        \abs{\phi(t,x)-\phi(t,y)} \le C e^{-\sigma t} \abs{x-y},\quad \forall x,y\in \B_r. 
    \end{equation}
    We do this by local Lyapunov analysis. By stability of the origin, for any $\eps>0$, there exists some $r>0$ such that $\abs{x}<r$ implies $\abs{\phi(t,x)}>\eps$ for all $t\ge 0$. We focus on our analysis in $\B_\eps$. 
    
    Consider $A=Df(0)$ and $g(x)=f(x)-Ax$. Then $g(0)=Dg(0)=0$. By Assumption \ref{as:f}, $Dg$ is continuous. Let $P$ be the solution to the Lyapunov equation $PA+A^TP=-I$. Consider the Lyapunov function $W(x)=x^TPx$. Fix any $x,y\in \B_r$. Then we have $\phi(t,x),\phi(t,y)\in \B_\eps$ for all $t\ge 0$. Define $E(t)=\phi(t,x)-\phi(t,y)$ for $t\ge 0$. We have 
    \begin{align*}
    \frac{dW(E(t))}{dt} & = 2E^T(t) P (f(t,\phi(t,x) - f(t,\phi(t,y)) \\
    & = 2E^T(t) P (AE(t) + g(\phi(t,x)) - g(\phi(t,y))\\
    & = E^T(t) (PA + A^TP) E(t) \\
    & + 2E^T(t)\int_0^1 Dg(\phi(t,y) + s E(t))dt\cdot E(t)\\
    & \le - \norm{E(t)}^2 + 2\sup_{\abs{z}\le \eps}\norm{Dg(z)} \norm{E(t)}^2, 
    \end{align*}
    where the first two equalities are direct computation, the third equality is by the mean value theorem, and in the last inequality, we used the convexity of $\B_\eps$. Since $Dg$ is continuous and $Dg(0)=0$, we can choose $\eps$ sufficiently small such that $2\sup_{\abs{z}\le \eps}\norm{Dg(z)}<1$. It follows that 
        \begin{align*}
    \frac{dW(E(t))}{dt} \le - c W(E(t)), 
    \end{align*}    
    where $c=(1-2\sup_{\abs{z}\le \eps}\norm{Dg(z)})/\lambda_{\max}(P)$ and $\lambda_{\max}(P)$ is the maximum eigenvalue of $P$. From here we readily conclude that (\ref{eq:contraction}) holds. 

    With (\ref{eq:contraction}), we continue from (\ref{eq:lip}) and compute 
    \begin{align}
        &\abs{V(x)-V(y)} \notag\\
        & \le L_\omega \frac{e^{L_fT}-1}{L_f}\abs{x-y}   + L_\omega e^{L_f T}\abs{x-y} C \int_0^\infty e^{-\sigma t}dt \notag\\
        & = L\abs{x-y}, \notag
    \end{align}    
    where $L=L_\omega (e^{L_fT}-1)/L_f + L_\omega e^{L_f T}C/\sigma$. We have proved that $V$ is locally Lipschitz on $\Omega$. By Rademacher's theorem, $V$ is differentiable almost everywhere in $\D$. At points where $V$ is differentiable, a viscosity solution reduces to a classical solution. Hence, (\ref{eq:lyap}) is satisfied almost everywhere in the classical sense.

    (3) Define 
    \begin{equation}
        \label{eq:J}
        J(x) = \int_0^\infty D\omega (\phi(t,x))\Phi(t,x)dt,\quad x\in \D,
    \end{equation}
    where $\Phi(t)$ is the fundamental matrix solution to the initial value problem
    $$
    \dot{\Phi}(t,x) = A(t,x) \Phi(t,x),\quad \Phi(0)=I,
    $$
    with $I$ being the $n$-dimensional identity matrix and $A(t,x) = Df(\phi(t,x))$. 
    We prove the following:
    \begin{enumerate}[(a)]
        \item $J(x)$ is well defined for all $x\in \D$. 
        \item $J$ is continuous with respect to $x$.
        \item The limit 
        \begin{equation}
            \label{eq:J_C1}
            \lim_{h\ra 0} \frac{\abs{V(x+h)-V(x) - J(x)h}}{\abs{h}} = 0 
        \end{equation}
        holds. 
    \end{enumerate}
    Combining these shows that $V$ is continuously differentiable on $D$ and $DV = J$. 

    To prove (a), fix any $x\in \D$. By asymptotic stability of the origin and $\omega$ being $C^1$, it is clear that $D\omega (\phi(t,x))$ is uniformly bounded. We prove (a) by bounding columns of $\Phi(t,x)$ with Lyapunov analysis. Consider the $i$th column given by 
    $$
    \dot{\Phi}_i(t,x) = A(t,x) \Phi_i(t,x),\quad \Phi_i(0)=e_i. 
    $$
    Clearly, $A(t,x)\ra A=Df(0)$ as $t\ra \infty$, because $f\in C^1$. Consider $P$ and $W$ as defined in the proof of part (2). We have
    \begin{align*}
    \frac{dW(\Phi_i(t,x))}{dt} & = \Phi_i^T(t,x)(P A(t,x) + A^T(t,x)P)\Phi_i(t,x) \\ 
    & = \Phi_i^T(t,x)(P A + A^TP)\Phi_i(t,x) \\
    &\quad + \Phi_i^T(t,x)(P \mathcal{E}(t) + \mathcal{E}^T(t)P)\Phi_i(t,x), 
    \end{align*} 
    where $\mathcal{E}(t)=A(t,x)-A$. Pick $\eps>0$ such that $2\norm{P}\eps<1$. Since $\mathcal{E}(t)\ra 0$ as $t\ra\infty$, there exists some $T>0$ such that $\abs{\mathcal{E}(t)}<\eps$ for all $t\ge T$. It follows that 
    \begin{align*}
    \frac{dW(\Phi_i(t,x))}{dt} & \le - cW(\Phi_i(t,x)), \quad \forall t\ge T, 
    \end{align*}
    where $c=(1-2\norm{P}\eps)/\lambda_{\max}(P)$. It follows that $\Phi_i(t)$ exponentially converges to zero as $t\ra \infty$. Hence $J(x)$ is well defined for each $x\in \D$. 

    To prove (b), we rely on continuous dependence of $\Phi(t,x)$ and $\phi(t,x)$ on $x$. Fix $x\in \D$. Consider $\rho>0$ such that $\bar{\B}_\rho(x)\subset \D$. By the analysis above, we know each element of $\Phi(t,y)$ converges exponentially to zero as $t\ra \infty$ for $y\in \bar{B}_{\rho}(x)$. It follows from a continuity and compactness argument that the convergence is uniform for $y\in \bar{\B}_{\rho}(x)$. Furthermore, $D\omega(\phi(t,y))$ is uniformly bounded for $y\in \bar{\B}_{\rho}(x)$. Similar to the proof of continuity of $V$ in Proposition \ref{prop:V}, we can write 
    \begin{align*}
        & \abs{J(y) - J(x)} \\
        & \le \int_0^T \abs{D\omega(\phi(t,y))\Phi(t,y) - D\omega(\phi(t,x))\Phi(t,x)}dt \\
        &  + \int_T^\infty \abs{D\omega(\phi(t,y))\Phi(t,y)}dt  +   \int_T^\infty \abs{D\omega(\phi(t,x))\Phi(t,x)}dt.
    \end{align*}
    Thanks to uniform convergence of $\Phi(t,y)$ to zero and uniform boundedness of $D\omega(\phi(t,y))$ for $y\in \bar{\B}_{\rho}(x)$, for any $\eps>0$, we can choose $T$ sufficiently large, independent of $y\in \bar{\B}_\rho(x)$, such that the last two integrals are bounded $\eps/4$. For this fixed $T$, by continuous dependence on initial conditions again, we can then choose $\rho'\in(0,\rho)$ such that the first integral is bounded by $\eps/2$ for all $y\in \B_{\rho'}(x)$. It follows that $\abs{J(y)-J(x)}<\eps$ for all $y\in \B_{\rho'}(x)$. We have proved that $J$ is continuous. 

    We now prove (c). Fix any $x\in \D$. Pick $\rho>0$ such that $\bar{\B}_\rho(x)\subset \D$. Let $r>0$ be such that (\ref{eq:contraction}) holds. Choose $\hat T>0$ such that $\abs{\phi(t,y)}<r$ for all $t\ge \hat T$ and all $y\in \bar{\B}_\rho(x)$. Consider any $h$ with $\abs{h}<\rho$ and $T>\hat T$. We have
    \begin{align}
        & \abs{V(x+h) - V(x) - J(x)h} \notag\\
        & = \left\vert\int_0^\infty \omega(\phi(t,x+h))dt - \int_0^\infty \omega(\phi(t,x))dt \right. \notag\\
        & \qquad \left. - \int_0^\infty D\omega (\phi(t,x))\Phi(t,x)dt \cdot h \right\vert \notag\\
        & \le \int_0^T |\omega(\phi(t,x+h)) - \omega(\phi(t,x)) \notag\\
        &\qquad \qquad- D\omega (\phi(t,x))\Phi(t,x) h |dt \notag\\
        & \phantom{\leq} + \int_T^\infty \abs{D\omega (\phi(t,x))\Phi(t,x)}dt \abs{h} \notag\\
        & \phantom{\leq} +  \int_T^\infty \abs{\omega(\phi(t,x+h)) - \omega(\phi(t,x)}dt. \label{eq:est_V_J}
    \end{align}    
    We analyze each of the three integrals above. Let $L_f$ and $L_\omega$ be Lipschitz constants of $f$ and $\omega$ on the reachable set from $\bar{\B}_\rho(x)$. 
    First, by Gronwall's inequality on $[0,\hat T]$ and contraction property (\ref{eq:contraction}) on $[\hat T, T]$, we have 
    $$
    \abs{\phi(T,x+h) - \phi(T,x)} \le e^{L_f \hat T}\abs{h} e^{-\sigma(T-\hat T)}. 
    $$
    By (\ref{eq:contraction}) again, this implies 
    \begin{align}
         & \int_T^\infty \abs{\omega(\phi(t,x+h)) - \omega(\phi(t,x)}dt \notag\\
         & \le L_\omega e^{L_f \hat T}\abs{h} e^{-\sigma(T-\hat T)} C \int_0^\infty e^{-\sigma t}dt \notag\\
         & = \frac{L_\omega e^{L_f \hat T} e^{-\sigma(T-\hat T)} C}{\sigma}\abs{h}. \label{eq:est1}
    \end{align}
    For any $\eps>0$, choose $T$ sufficiently large such that 
    \begin{equation}\label{eq:est2}
    L_\omega e^{L_f \hat T} e^{-\sigma(T-\hat T)} C/\sigma <\eps/4
    \end{equation}
    and 
    \begin{equation}\label{eq:est3}
    \int_T^\infty \abs{D\omega (\phi(t,x))\Phi(t,x)}dt<\frac{\eps}{4},
    \end{equation}
    the latter of which is possible as analyzed in the proof of (b). 

    By the mean value theorem, we have
    \begin{align*}
    & \omega(\phi(t,x+h)) - \omega(\phi(t,x)) = D\omega (\phi(t,\xi))\Phi(t,\xi) h,
    \end{align*}
    where $\xi = (1-c)x + c(x+h)$ for some $c\in (0,1)$. Note that $\xi-x=ch$ By uniform continuity of $D\omega (\phi(\cdot,\cdot))\Phi(\cdot,\cdot)$ on $[0,T]\times \bar{\B}_\rho(x)$, there exists $\rho'\in (0,\rho)$ such that 
    \begin{equation}\label{eq:est4}
    \abs{D\omega (\phi(t,\xi))\Phi(t,\xi) - D\omega (\phi(t,x))\Phi(t,x) } < \frac{\eps}{2},
    \end{equation}
    for all $h$ with $\abs{h}<\rho'$. Putting (\ref{eq:est1}), (\ref{eq:est2}), (\ref{eq:est3}), (\ref{eq:est4}), together with (\ref{eq:est_V_J}) implies 
    \begin{equation}
    \abs{V(x+h) - V(x) - J(x)h} \le \eps \abs{h},
    \end{equation}
    for all $h$ such that $\abs{h}<\rho'$. Hence the limit (\ref{eq:J_C1}) holds and (c) is proved. 

    We conclude that $V$ defined by (\ref{eq:V}) is continuously differentiable on $\D$ and the unique solution solution to (\ref{eq:lyap}) in $C^1(\Omega)$ for any open set $\Omega\subset\D$ by item (1) of the proposition.  
\end{proof}

\section{Proof of Theorem \ref{thm:zubov_pde}}\label{sec:proof_prop_zubov}

A viscosity solution of (\ref{eq:zubov}) is interpreted as a viscosity solution to $F_Z(x,u,Du)=0$ with 
\begin{equation}\label{eq:viscosity_zubov_pde}
    F_Z(x,u,p) = -\omega(x)\psi(u)(1-u) - p\cdot f(x),
\end{equation}
where $\psi$ is from (\ref{eq:beta}). 

\begin{proof}
We first verify that $W$ defined by (\ref{eq:W}) is a viscosity solution to (\ref{eq:zubov}) on $\Real^n$. Let $h\in C^1(\Real^n)$. Suppose that $x_0\in \D$ is a local maximum of $W-h$. It follows from the same argument in the proof of Proposition \ref{prop:lyap}(1) that 
\begin{equation}\label{eq:verify_W1}
    W(\phi(t,x_0)) - W(x_0) \le h(\phi(t,x_0)) - h(x_0)
\end{equation}
for all $t$ close to 0. Note that $\phi(t,x_0)\in \D$ and $W(x)=\beta(V(x))$ for all $x\in\D$. By Proposition \ref{prop:V}, equation (\ref{eq:beta}), and continuous differentiability of $h$, we have
\begin{align}
        -\omega(x_0)\psi(W)(1-W) & = \lim_{t\ra 0^+} \frac{W(\phi(t,x_0)) - W(x_0)}{t}  \notag\\
        & \le Dh(x_0) \cdot f(x_0). \label{eq:verify_W2}
\end{align}
When $x_0\in\Real^n\setminus \D$ is a local maximum of $W-h$, we have $W(\phi(t,x_0))\equiv 1$ and (\ref{eq:verify_W2}) still holds. This verifies that $W$ is a viscosity subsolution of (\ref{eq:lyap}) on $\Real^n$ in view of (\ref{eq:viscosity_zubov_pde}). Similarly, we can show that $W$ is a viscosity supersolution on $\Real^n$. 

We proceed to prove uniqueness on any open set $\Omega$ containing the origin. Let $u_1$ and $u_2$ be two viscosity solutions to (\ref{eq:zubov}) on $\Omega$ subject to the same boundary condition (\ref{eq:zubov_boundary}) and satisfying $u_1(0)=u_2(0)=0$. Pick $\delta>0$ sufficiently small such that $\bar\B_\delta \subset\Omega \cap \D$, $u_i(x)<1$, and $\psi(u_i(x))>0$ for all $x\in \B_\delta$. 
Then $\beta^{-1}$ is well defined and increasing on the range of $u_i(x)$ on $\B_\delta$ for $i=1,2$. This is possible by continuity of $u_i$ and $\psi$, and the definition of $\beta$ in (\ref{eq:beta}). By \cite[Proposition 2.5] {bardi1997optimal}, $v_i(x)=\beta^{-1}(u_i(x))$ is a viscosity solution of 
$$
F_Z(x,\beta(v),\beta'(v)Dv) = 0,\quad x\in \B_\delta. 
$$
By (\ref{eq:viscosity_zubov_pde}) and positiveness of $\psi(\beta(v))(1-\beta(v))$ for $x\in \B_\delta$, $v_i$ ($i=1,2$) is a viscosity solution of (\ref{eq:lyap}) on $\B_\delta$ in view of (\ref{eq:viscosity_pde}). Clearly, $v_1(0)=v_2(0)$. By Proposition \ref{prop:lyap}(1) and continuity, $v_1$ and $v_2$ are identical on $\bar\B_\delta$, which implies $u_1$ and $u_2$ are identical on $\bar\B_\delta$. 

Suppose $u_1$ and $u_2$ are nonidentical on $\Omega'=\Omega\setminus\bar\B_\delta$. Note that $\partial \Omega' = \partial \Omega \cup \partial \bar\B_\delta$. We have $u_1=u_2$ on $\partial \Omega'$. 

Without loss of generality, assume there exists $\bar x\in \Omega'$ such that $u_1(\bar x) > u_2(\bar x)$. For each $\eps>0$, define an  auxiliary function
$$
\Phi_\eps(x,y) = u_1(x) - u_2(y) - \frac{\abs{x-y}^2}{2\eps},\quad (x,y)\in \bar\Omega' \times \bar\Omega'. 
$$
Let $(x_\eps,y_\eps)$ be a maximum $\Phi_\eps$ on $\bar\Omega' \times \bar\Omega'$.  
\ifarXiversion
Since $\Phi_\eps(x_\eps,x_\eps)\le \Phi_\eps(x_\eps,y_\eps)$, we have 
\begin{equation}\label{eq:u2}
\frac{\abs{x_\eps - y_\eps}^2}{2\eps} \le u_2(x_\eps) - u_2(y_\eps).     
\end{equation}
It follows that $\abs{x_\eps - y_\eps}\le \sqrt{C\eps}$, where $C>0$ satisfies $4\sup_{x\in \bar\Omega'}u_2(x)\le C$. Hence $\abs{x_\eps - y_\eps}\ra 0$ as $\eps\ra 0$. By uniform continuity of $u_2$ and (\ref{eq:u2}), $\frac{\abs{x_\eps - y_\eps}^2}{2\eps}\ra 0$ as $\eps\ra 0$. 

We consider two cases: (1) there exists a sequence  $\eps_n\ra 0$ such that either $x_{\eps_n}\in \partial \Omega'$ or $y_{\eps_n} \in \partial \Omega'$; (2) for all $\eps$ sufficiently small, $(x_\eps,x_\eps)\in\Omega'\times\Omega'$. In case (1), if $x_{\eps_n}\in \partial \Omega'$, by the boundary condition, we have 
$$
u_1(x_{\eps_n}) - u_2(y_{\eps_n}) = u_2(x_{\eps_n}) - u_2(y_{\eps_n}), 
$$
which implies
\begin{align*}
0< u_1(\bar x) - u_2(\bar x)  = \Phi_{\eps_n}(\bar x,\bar x) & \le \Phi_\eps(x_{\eps_n},y_{\eps_n})\\
&\le u_1(x_{\eps_n}) - u_2(y_{\eps_n}) \\
& = u_2(x_{\eps_n}) - u_2(y_{\eps_n}). 
\end{align*}
Letting $n\ra \infty$, the right-hand side approaches zero, which is a contradiction. For the case with $y_{\eps_n}\in \partial \Omega'$, we have 
$$
u_1(x_{\eps_n}) - u_2(y_{\eps_n}) = u_1(x_{\eps_n}) - u_1(y_{\eps_n}), 
$$
which leads to a similar contradiction. 

Now we consider the case $(x_\eps,y_\eps)\in\Omega'\times\Omega'$ for all $\eps$ sufficiently small. Define 
$$
h_1(y) = u_1(x_\eps) - \frac{\abs{x_\eps - y}^2}{2\eps},\quad h_2(x) = u_2(y_\eps) + \frac{\abs{x - y_\eps}^2}{2\eps}. 
$$
It follows that $x_\eps$ is a local maximum of $u_1-h_2$ and $y_\eps$ is a local minimum of $u_2-h_1$. Clearly, $h_1$ and $h_2$ are continuously differentiable and 
$$
Dh_1(y_\eps) = \frac{x_\eps - y_\eps}{\eps} = Dh_2(x_\eps). 
$$
By the definition of viscosity sub- and supersolutions for (\ref{eq:viscosity_zubov_pde}), we have
\else
Following a standard comparison argument \cite[Theorem II.3.1]{bardi1997optimal} (see also \cite{liu2023physics}), we can show that $(x_\eps,y_\eps)$ can only be achieved in $\Omega'\times\Omega'$ for all $\eps$ sufficiently small. Furthermore, we can establish $\abs{x_\eps - y_\eps}\ra 0$ and $\frac{\abs{x_\eps - y_\eps}^2}{2\eps}\ra 0$ as $\eps\ra 0$, and 
\fi
$$
-\omega(x_\eps)\psi(u_1(x_\eps))(1-u_1(x_\eps)) - \frac{x_\eps - y_\eps}{\eps}\cdot f(x_\eps) \le 0,
$$
and 
$$
-\omega(y_\eps)\psi(u_2(y_\eps))(1-u_2(y_\eps)) - \frac{x_\eps - y_\eps}{\eps}\cdot f(y_\eps) \ge 0.
$$
Let 
$$
a_\eps =  - \frac{(x_\eps - y_\eps)\cdot f(x_\eps)}{\eps\omega(x_\eps)}, \quad b_\eps = - \frac{(x_\eps - y_\eps)\cdot f(y_\eps)}{\eps\omega(y_\eps)}.
$$
Define $G(s)=\psi(s)(1-s)$. From the above two inequalities, we have
$$
G(u_1(x_\eps))\ge a_\eps,\quad G(u_2(y_\eps))\le b_\eps.  
$$
Furthermore, we have 
\begin{align}
0&<u_1(\bar x) - u_2(\bar x) = \Phi_\eps(\bar x,\bar x) \le \Phi_\eps(x_\eps,y_\eps)\notag\\
& \le u_1(x_\eps)-u_2(y_\eps). \label{eq:u1-u2}
\end{align}
By monotonicity of $G$, we obtain  
$$
a_\eps - b_\eps \le G(u_1(x_\eps)) -  G(u_2(y_\eps)) \le 0. 
$$
Recall that $\abs{x_\eps-y_\eps}\ra 0$ and $\frac{\abs{x_\eps - y_\eps}^2}{2\eps}\ra 0$ as $\eps\ra 0$. By Lipschitz continuity of $f$ and $\omega$ on $\bar\Omega'$, we can verify $a_\eps-b_\eps\ra 0$ as $\eps \ra 0$. Now, by uniform continuity of $G^{-1}$ on the compact set $G(Y)$, where $Y\subset I$ is a compact set containing the image of $u_1$ and $u_2$ from $\bar\Omega'$, we conclude that $u_1(x_\eps)-u_2(y_\eps)\ra 0$ as $\eps\ra 0$, which contradicts (\ref{eq:u1-u2}). 

Hence $u_1$ and $u_2$ are identical on $\Omega'$ as well. We have proved uniqueness of viscosity solution to (\ref{eq:zubov}) on $\bar\Omega$. Items (2) and (3) are direct consequences of items (2) and (3) in Proposition \ref{prop:lyap}. 
\end{proof}

\section{Proof of Proposition  \ref{prop:error}}\label{sec:proof_prop_error}

We begin with the following definition. We say that $v$ is an $\eps$-approximate viscosity solution of (\ref{eq:zubov}), if 
    \begin{align*}
    F_Z(x,v(x),p) &\le \eps,\quad \forall x\in\Omega, \, p\in D^+v(x), \\
    F_Z(x,v(x),p) &\ge -\eps,\quad \forall x\in\Omega, \, p\in D^-v(x). 
    \end{align*}
    where $F_Z$ is from (\ref{eq:viscosity_zubov_pde}).

\begin{proof}
    For each $\delta>0$ such that $\bar\B_\delta \subset\Omega \cap \D$, we denote $\Omega'=\Omega\setminus\bar\B_\delta$. Note that $\partial \Omega' = \partial \Omega \cup \partial \bar\B_\delta$. In the proof of Theorem \ref{thm:zubov_pde}, we essentially proved the following fact: 
    if $u_1$ is a viscosity subsolution to (\ref{eq:zubov}) and $u_2$ is a viscosity supersolution to (\ref{eq:zubov}) on  $\Omega'$ and $u_1\le u_2$ on $\partial \Omega'$, then we have $u_1\le u_2$ on $\Omega'$. We define 
    \begin{align}
    C(\eps, \eps_b) & = \max\left\{ -v + G^{-1}\left(G(v) - \frac{\eps}{\min_{x\in\bar\Omega\setminus\B_{\delta}}\omega(x)}\right),\right. \notag\\
    & \quad \left. v - G^{-1}\left(G(v) + \frac{\eps}{\min_{x\in\bar\Omega\setminus\B_{\delta}}\omega(x)}\right), \eps_b \right\}\label{eq:C}
    \end{align}
    Evaluations of $G^{-1}$ above are valid because of the assumption on $v$ and $\eps$. 
    By continuity of $G$ and $G^{-1}$, $C(\eps,\eps_b)\ra 0$ as $\eps\ra 0$ and $\eps_b\ra 0$. With this choice, it follows from monotonicity of $G$ that 
    \begin{align*}
     -\omega(x) G(v+C(\eps,\eps_b)) &\ge -\omega(x) \left(G(v) - \frac{\eps}{\min_{x\in\bar\Omega\setminus\B_{\delta}}\omega(x)}\right)\\
    &  \ge -w(x)G(v) + \eps.
    \end{align*}
    Similarly, $-\omega(x) G(v-C(\eps,\eps_b)) \le -w(x)G(v) - \eps.$ 
    Hence, we can readily verify that $v+C(\eps,\eps_b)$ is a viscosity supersolution and $v-C(\eps,\eps_b)$ is a viscosity subsolution of (\ref{eq:zubov}) on $\Omega'$ with boundary condition (\ref{eq:zubov_boundary}) on $\partial\Omega'$. By the comparison argument in the proof of Theorem \ref{thm:zubov_pde}, we can show $W(x)\le v+C(\eps,\eps_b)$ and $W(x)\ge v-C(\eps,\eps_b)$ on $\Omega'$. 

    Now consider the sequence $\set{v_n}$ and we show it uniformly converges to $W$ on $\bar\Omega$. For any $\rho>0$, choose $\delta>0$ and $N>0$ such that $\abs{W(x)}\le \frac{\rho}{2}$ and $\abs{v_n(x)}\le \frac{\rho}{2}$ for all $x\in\B_\delta$ and all $n>N$. This is possible by the fact that $W(0)=0$, $W$ is continuous, $v_n(0)\ra 0$ as $n\ra \infty$ and $v_n$ has a uniform Lipschitz constant on $\Omega$. By uniform convergence of $v_n$ to $W$ on $\partial \Omega$, we can choose $N$ sufficiently large such that $\abs{W(x)-v_n(x)}\le \rho$ for all $n>N$ and $x\in\partial \Omega$. 

    With fixed $\rho$ and $\delta$, choose $N$ sufficiently large such that $C(\eps_n,\rho)\le \rho$ for all $n>N$. This is possible by $\eps_n\downarrow 0$, the continuity of $G$ and $G^{-1}$, and the definition of $C(\eps,\eps_b)$ in (\ref{eq:C}). Since $v_n$ is an $\eps_n$-approximate viscosity solution to (\ref{eq:zubov}) on $\Omega'$ with boundary error $\abs{W(x)-v_n(x)}\le \rho$ on $\partial\Omega'$, where $\Omega'=\Omega\setminus\bar\B_\delta$, by the conclusion established earlier, we have 
    $
    \abs{v_n(x)-W(x)}\le C(\eps_n, \rho) \le \rho
    $
    for all $x\in \bar\Omega'$. Since we also have $\abs{W(x)-v_n(x)}\le \rho$ for $x\in \B_\delta$ and $n>N$, we have proved $\abs{v_n(x)-W(x)}\le \rho$ for all $x\in \bar\Omega$ and all $n>N$. 
\end{proof}

\end{document}